\documentclass[10pt]{article}
\usepackage[cp1251]{inputenc}
\usepackage{amssymb}
\usepackage{amsfonts}
\usepackage{amsthm}

\usepackage{array}
\oddsidemargin=0cm \topmargin=0cm\textwidth=14cm\textheight=20cm

\makeatletter
\@addtoreset{equation}{section}
\makeatother

\begin{document}

\theoremstyle{plain}
\newtheorem{Th}{Theorem}[section]
\theoremstyle{definition}
\newtheorem{lemma}[Th]{Lemma}
\newtheorem{Def}[Th]{Definition}
\newtheorem{Cor}[Th]{Corollary}
\theoremstyle{remark}
\newtheorem{remark}[Th]{Remark}
\theoremstyle{proof}

\renewcommand{\thesection}{\arabic{section}.}
\renewcommand{\theTh}{\thesection\arabic{Th}}
\renewcommand{\theequation}{\thesection\arabic{equation}}
\newcommand{\Om}{\Omega}
\newcommand{\Omm}{\tilde\Omega}
\newcommand{\Ga}{\Gamma}
\newcommand{\Go}{\Gamma_0}
\newcommand{\de}{\delta}
\newcommand{\phii}{\tilde\phi}
\newcommand{\om}{\omega}
\newcommand{\pa}{\partial}

\sloppy
\binoppenalty=10000
\relpenalty=10000

\title{\bf On attractors of $m$-Hessian evolutions}
\date{}
\maketitle
\author{Nina Ivochkina}

\centerline{St.-Petersburg State University}
 \centerline{Universitetskaya nab., 7-9}
 \centerline{199034 St.Petersburg, Russia}
\vskip .1in
\centerline{ninaiv@NI1570.spb.edu}

\vskip .2in
\author{Nadezda Filimonenkova}

\centerline{Saint-Petersburg State University of Architecture and Civil Engineering}
 \centerline{Vtoraja Krasnoarmejskaja ul., 4}
 \centerline{190005 St.Petersburg, Russia}
\vskip .1in
\centerline{Saint-Petersburg State Polytechnic University}
 \centerline{Polytechnicheskaya ul., 29}
 \centerline{195251 St.Petersburg, Russia}
\vskip .1in
\centerline{nf33@yandex.ru}
\vskip .1in
\thanks{The work is supported by ''Nauchnye Shkoly'' (grant No. 1771.2014.1), Sankt Petersburg State University (grant No. 6.38.670.2013), and the Russian Foundation for Basic Research (grant No. 15-01-07650).}

\begin{abstract}
We study the asymptotic behavior of $C^2$-evolutions $u = u(x,t)$ under a given action of the $m$-Hessian evolution operators and boundary conditions. We obtain sufficient (close to necessary) conditions for the convergence of solutions to the first intial-boundary value problem for the $m$-Hessian evolution equations to stationary functions as $t\rightarrow\infty$. Bibliography: 19 titles.
\end{abstract}

\section{Introduction}
The development of the modern theory of stationary Hessian equations
\cite{Iv83}, \cite{Iv85}, \cite{CNS85}, \cite{F11}
brought out a natural problem of its extension onto evolution Hessian equations. Apparently, the first examples may be found in the book
\cite{Kr.b},
where fully nonlinear equations have been considered in frames of N.V.Krylov theory of Bellman equations. One of them is a parabolic Monge -- Ampere equation, which also has been considered in the paper
\cite{Tso},
\begin{equation}-u_t\det u_{xx}=f>0,\quad(x,t)\in Q_T=\Omega\times(0;T),\quad\Omega\subset R^n.\label{pMA}\end{equation}
Later on some sufficient conditions for existence of admissible solution to the first initial boundary value problem for equations
\begin{equation}-u_t+tr_m^{\frac{1}{m}}u_{xx}=f>0,\quad(x,t)\in Q_T=\Omega\times(0;T),\quad m=1,\dots ,n \label{il}\end{equation}
have been found in the paper
\cite{IL}.
Then equations (\ref{il}) looked as the most natural fully nonlinear analogs of heat equation, $m=1$.

On the other hand the first initial boundary value problem for logarithmic parabolic Hessian equations
\begin{equation}-u_t+\log tr_{m,l}u_{xx}=f,\quad tr_{m,l}u_{xx}:=\frac{tr_m u_{xx}}{tr_l u_{xx}},\quad 0\le l<m\le n\label{log}\end{equation}
had been involved in development concerning Hessian integral norms. These norms were introduced in the paper
\cite{W94},
where admissible solvability of the first initial boundary value problem for (\ref{log}) with $l=0$ had been proved and applied to establish some imbedding theorems. For general choice of $l, m$ this solvability was proved in
\cite{TW98},
where also the asymptotic behavior of admissible solutions was under consideration. In the paper \cite{TW98} the goal was to prove Poincare type inequalities for Hessian integral norms.

Further generalization of equation (\ref{log}) may be found in the paper
\cite{ChW},
where solvability of the relevant parabolic problems were used as a main tool to establish some variational properties of stationary Hessian equations.

In this paper our principal concern is to consider asymptotic behavior of classic solutions of the first initial boundary value problem for $m$-Hessian evolution equations
\begin{equation}E_m[u]=f,\quad u\arrowvert _{\partial^{\prime}Q_T} =\varphi,\quad 1\le m\le n,\label{Dp}\end{equation}
where $\partial^{\prime}Q_T=\left(\Omega\times\{t=0\}\right)\cup \left(\partial \Omega\times[0;T]\right)$, $\Omega$ is a bounded domain in $R^n$,
\begin{equation}E_m[u]:=-u_tT_{m-1}[u]+T_m[u],\quad(x,t)\in \bar Q_T,\label{evo}\end{equation}
$T_m[u]=T_m(u_{xx}):=tr_m u_{xx}$, $u_{xx}$ is the Hesse matrix of $u$ in space variables. We also set by definition $T_0(u_{xx})\equiv1$. Then $T_1[u]=\Delta u$ and (\ref{evo}) is the heat operator, i.e., (\ref{Dp}) is the classic first initial boundary value problem for heat equation.

The $m$-Hessian evolution operators (\ref{evo}) including $m=n+1$ have been introduced in the paper
\cite{I08}.
The idea was to connect parabolic Monge -- Ampere equation (\ref{pMA}) with heat equation and
to find sufficient conditions for solvability of the problem (\ref{Dp}) in the weak (approximate) sense via  parabolic extension of Alecksandrov -- Bakel'man maximum principle
\cite{Kr}, \cite{NU}, \cite{Tso}.
It was assumed in
\cite{I08}
that the first initial boundary value problem for considered equations a priori has $m$-admissible solutions. Later on sufficient conditions for such solvability were presented  in the paper
\cite{IU}.
Notice that in the earlier  paper
\cite{Tso}
this approach was applied to parabolic Monge  --  Ampere equation (\ref{pMA}).

To give a sample of our results consider two dimensional case, i.e., $\Omega\subset R^2$. If $m=1$ operator (\ref{evo}) is the heat operator, while $E_2[u]=-u_t\Delta u+\det u_{xx}$. Assume that $u\in C^{2,1}(\bar\Omega\times[0;\infty))$, $E_2[u]>0$ and let $\bold u$ be a strictly convex in $\bar\Omega$ $C^2$-function. Then there exists $\nu=\nu[\bold u]$ such that $\det\bold u_{xx}\ge\nu>0$.
\begin{Th}
Assume there is a point $x_0\in\Omega$ such that $\Delta u(x_0,0)>0$ and $\lim_{t\rightarrow\infty}|u(x,t)-\bold u(x)|=0$ for $x\in\partial\Omega$, $\lim_{t\rightarrow\infty}|E_2[u]-\det\bold u_{xx}|=0$, $x\in\Omega$. Then $\lim_{t\rightarrow\infty}|u(x,t)-\bold u(x)|=0$ for all $x\in\bar\Omega$.
\end{Th}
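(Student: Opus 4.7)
Set $w(x,t) = u(x,t) - \mathbf{u}(x)$ and $g(x,t) = E_2[u] - \det \mathbf{u}_{xx}$. The hypotheses translate to $g(\cdot,t)\to 0$ pointwise on $\Omega$ and $w(\cdot,t)\to 0$ pointwise on $\partial\Omega$, and the target is the upgrade to uniform convergence $w(\cdot,t)\to 0$ on $\bar\Omega$.

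\medskip

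\noindent\emph{Step 1 (admissibility).} First I would argue that $\Delta u > 0$ everywhere in $\bar\Omega\times[0,\infty)$. If $\Delta u(x_1,t_1)=0$ at some point, the symmetric matrix $u_{xx}(x_1,t_1)$ would have trace zero, its eigenvalues would be opposite in sign, and hence $\det u_{xx}(x_1,t_1)\le 0$; but then $E_2[u](x_1,t_1)=\det u_{xx}(x_1,t_1)\le 0$, contradicting $E_2[u]>0$. Because $\Delta u$ is continuous and $\Delta u(x_0,0)>0$, connectedness forces $\Delta u>0$ throughout, and the $2$-admissibility of $u$ also gives $u_{xx}\ge 0$.

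\medskip

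\noindent\emph{Step 2 (linear parabolic identity for $w$).} In dimension two the cofactor map is linear on symmetric matrices, so the exact identity
\[
\det u_{xx} - \det \mathbf{u}_{xx} \;=\; \mathrm{tr}\bigl(\bar a(x,t)\, w_{xx}\bigr),\qquad \bar a(x,t):=\mathrm{cof}\!\left(\frac{u_{xx}+\mathbf{u}_{xx}}{2}\right),
\]
holds pointwise. Combining this with $E_2[u]=-u_t\Delta u+\det u_{xx}$ turns the nonlinear problem into a linear one:
\[
-w_t\,\Delta u + \mathrm{tr}\bigl(\bar a\, w_{xx}\bigr) \;=\; g(x,t).
\]
Strict convexity of $\mathbf{u}$ on $\bar\Omega$ gives $\mathrm{cof}(\mathbf{u}_{xx})\ge \nu_0 I$ for some $\nu_0>0$, so by Step~1 one has $\bar a \ge \frac{1}{2}\mathrm{cof}(\mathbf{u}_{xx})\ge \frac{\nu_0}{2}I$ and the coefficient of $w_t$ is strictly positive. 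Thus the operator acting on $w$ is uniformly parabolic in $\bar\Omega\times[0,\infty)$.

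\medskip

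\noindent\emph{Step 3 (barrier and comparison).} Given $\eta>0$, choose $T$ so large that $|w|_{\partial\Omega}<\eta$ and $|g|_{\Omega}<\eta$ for $t\ge T$, and set $M_0:=\sup_{\bar\Omega}|w(\cdot,T)|$, which is finite by the $C^{2,1}$ regularity. I would use a pair of barriers
\[
V^\pm(x,t) \;=\; \mathbf{u}(x) \;\pm\; \bigl(\eta + \sigma\,\psi(x) + M_0\,e^{-\alpha(t-T)}\bigr),
\]
with $\psi\ge 0$ a spatial ``bowl'' vanishing on $\partial\Omega$ (for instance a quadratic) whose Hessian makes $\mathrm{tr}(\bar a\,\psi_{xx})$ sufficiently negative, and with $\alpha,\sigma>0$ tuned so that $V^+$ (resp.\ $V^-$) is a super- (resp.\ sub-) solution of the linear equation of Step~2. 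The linear comparison principle on $\Omega\times[T,T']$, for any $T'>T$, then gives $V^-\le u\le V^+$, i.e.
\[
|w(x,t)| \;\le\; \eta + \sigma\,\psi(x) + M_0\,e^{-\alpha(t-T)},\qquad (x,t)\in\bar\Omega\times[T,\infty).
\]
Letting $t\to\infty$ and afterwards $\eta,\sigma\to 0$ yields the required uniform convergence $w(\cdot,t)\to 0$ on $\bar\Omega$.

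\medskip

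\noindent\emph{Expected main obstacle.} The delicate step is the calibration of the barrier so that all three small parameters (the boundary defect $\eta$, the interior defect $g$, and the initial-slice defect $M_0$) are absorbed simultaneously. For the linear comparison of Step~3 to apply globally, one really needs \emph{uniform} (in $t\ge T$) bounds both from below on $\Delta u$ and from above on $|u_{xx}|$, so that Step~2 produces a uniformly parabolic equation on the whole half-cylinder rather than merely at each point. Extracting these $t$-uniform a priori bounds on the admissible solution from the assumptions $E_2[u]>0$ and $g\to 0$ is the main technical work; once they are in hand, the barrier construction and the comparison argument are standard.
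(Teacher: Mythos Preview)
Your route is genuinely different from the paper's, and the obstacle you flag at the end is not a mere technicality but a real gap.

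First, a concrete error in Step~1/Step~2: evolutionary $2$-admissibility, i.e.\ $S^{ev}[u]\in K_2^{ev}$, only yields $u_{xx}\in K_1$, that is $\Delta u>0$; it does \emph{not} force $u_{xx}\ge 0$ (see (\ref{rKe}) with $m=2$). Consequently $\mathrm{cof}(u_{xx})$ need not be nonnegative, and your inequality $\bar a\ge\tfrac12\,\mathrm{cof}(\mathbf u_{xx})$ is unjustified. Without a $t$-uniform upper bound on $|u_{xx}|$ (which the hypotheses do not provide), the negative eigenvalue of $\mathrm{cof}(u_{xx})$ can swamp $\mathrm{cof}(\mathbf u_{xx})$ and your linearized operator may fail to be uniformly parabolic on $\bar\Omega\times[T,\infty)$. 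The same missing bound blocks the lower control on $\Delta u$ that you need for the $w_t$ coefficient. So your Step~3 cannot be executed as written, and the ``main technical work'' you postpone is in fact the heart of the matter; there is no evident way to extract those uniform Hessian bounds on $u$ from $E_2[u]>0$ and $g\to 0$ alone.

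The paper avoids this entirely by never linearizing. It works with the nonlinear comparison principle for $E_m$ itself (Theorem~3.6), which only needs the \emph{barrier} to be $m$-admissible, not $u$. The barriers are multiplicative perturbations of the stationary solution,
\[
V^\pm(x,t)=\sigma^\pm(t)\bigl(\mathbf u(x)-A^\pm\bigr)+\mathbf u(x),
\]
for which one has the clean identity $E_m[V]=(A-\mathbf u)T_{m-1}[\mathbf u]\,\theta'+mT_m[\mathbf u](\theta+1)$ with $(1+\sigma)^m=1+m\theta$. Choosing $\theta$ to solve a scalar linear ODE $\theta'+b(\theta+h)=0$ with $h(t)$ dominating the boundary and right-hand-side defects makes $E_m[V^+]\le E_m[u]$ (resp.\ $E_m[V^-]\ge E_m[u]$), and the constants $A^\pm$, $b^\pm$ depend only on $\nu_m$, $\mu_{m-1}$, $\mu_m$, $\mathrm{osc}\,\mathbf u$---quantities attached to $\mathbf u$, not to $u$. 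Lemmas~5.1 and~5.2 then give two-sided bounds $|u-\mathbf u|\le C|\theta^\pm(t)|$ with $\theta^\pm(t)\to 0$, which is the conclusion.

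In short: your 2D cofactor linearization is elegant but leads to a parabolic equation whose coefficients you cannot control; the paper's device is to keep the comparison fully nonlinear and to load all the analysis onto barriers built from $\mathbf u$ alone, so that no a~priori estimates on $u$ beyond admissibility are ever needed.
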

It may be said that under conditions of Theorem 1.1 a function $\bold u=\bold u(x)$ attracts evolutions $\{u=u(x,t)\}$ if the functions $\bold f=\det\bold u_{xx}$, $x\in\bar\Omega$ and $\Phi=\bold u\arrowvert_{\partial\Omega}$ attract $\{f=E_2[u]\}$, $(x,t)\in\bar\Omega\times[0;\infty)$, $\{\varphi=u(x,t)\}$, $(x,t)\in\partial\Omega\times[0;\infty)$ respectively. Notice that function $u(x,0)$ from above theorem has no obligations to be convex but it upgrades to convex in time.
\begin{remark}
We notice firstly that inequality $E_2[u]>0$, $(x,t)\in\bar Q_T$ does not admit points $x\in\Omega$ such that $\Delta u(x,0)=0$. Indeed, if so the eigenvalues of $u_{xx}$ are of different sign either vanish. Hence,  $E_2[u](x,0)=\det u_{xx}(x,0)\le0$, what is impossible.

Suppose now that $\Delta u(x,0)<0$, $x\in\bar\Omega$, i.e., $x_0$ from Theorem 1.1 does not exist. If there is a convex solution $\bold v$ to the Dirichlet problem $\det\bold v_{xx}=\det\bold u_{xx}$, $\bold v\arrowvert_{\partial\Omega}=-\Phi$, then $-\bold v$ attracts $u(x,t)$ due to Theorem 1.1.
\end{remark}
\vskip .1in
The second observation from Remark 1.2 carries out
\begin{Cor}
Let $\partial\Omega\in C^{4+\alpha}$, $\alpha>0$ be strictly convex, $u\in C^2(\bar\Omega\times[0;\infty))$. Assume that $E_2[u]>0$ and there is $\bold f\in C^{2+\alpha}(\bar\Omega)$, $\bold f>0$ such that $\lim_{t\rightarrow\infty}|E_2[u]-\bold f|=0$. Assume also $\lim_{t\rightarrow\infty}|u(x,t)|=0$ for $x\in\partial\Omega$. Then a convex solution of the Dirichlet problem
\begin{equation}\det\bold u_{xx}=\bold f,\quad\bold u\arrowvert_{\partial\Omega}=0\label{d2}\end{equation}
attracts $u$ either $-u$.
\end{Cor}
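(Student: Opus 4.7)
The plan is to produce the stationary attractor $\bold u$ by solving the Dirichlet problem (\ref{d2}) and then to invoke Theorem~1.1 or the second part of Remark~1.2 according to the sign of $\Delta u(\cdot,0)$. Step one: by the classical stationary Monge--Amp\`ere theory on strictly convex domains, under the standing assumptions $\partial\Omega\in C^{4+\alpha}$ strictly convex and $\bold f\in C^{2+\alpha}(\bar\Omega)$, $\bold f>0$, the problem (\ref{d2}) admits a unique convex solution $\bold u\in C^{2+\alpha}(\bar\Omega)$ with $\bold u_{xx}$ strictly positive definite on $\bar\Omega$. In particular, $\bold u$ is strictly convex and possesses all regularity and positivity properties that Theorem~1.1 requires of an attractor.

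Step two: I would analyze the sign of $\Delta u(x,0)$. The first part of Remark~1.2 forbids $\Delta u(x,0)=0$ at any $x\in\Omega$, since there the eigenvalues of $u_{xx}(x,0)$ would be of opposite signs or both vanish, giving $\det u_{xx}(x,0)\le 0$ and contradicting $E_2[u](x,0)>0$. Connectedness of $\Omega$ and continuity of $\Delta u(\cdot,0)$ leave exactly two possibilities: $\Delta u(\cdot,0)>0$ on $\Omega$, or $\Delta u(\cdot,0)<0$ on $\Omega$.

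In the first possibility, the hypotheses of Theorem~1.1 are met with the $\bold u$ just constructed: $\bold u|_{\partial\Omega}=0$ converts $|u(x,t)|\to 0$ on $\partial\Omega$ into $|u(x,t)-\bold u(x)|\to 0$ there, and $\det\bold u_{xx}=\bold f$ converts $|E_2[u]-\bold f|\to 0$ on $\Omega$ into $|E_2[u]-\det\bold u_{xx}|\to 0$. Theorem~1.1 then yields $|u(x,t)-\bold u(x)|\to 0$ on $\bar\Omega$, so $\bold u$ attracts $u$. In the second possibility, I would pass to $v:=-u$: since in dimension two $\det(-u_{xx})=\det u_{xx}$, one computes $E_2[v]=-v_t\Delta v+\det v_{xx}=E_2[u]$, hence $E_2[v]>0$, $E_2[v]\to\bold f$, $|v|\to 0$ on $\partial\Omega$, and $\Delta v(x,0)>0$ on $\Omega$. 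The first possibility, now applied to $v$ with the same attractor $\bold u$, produces $|{-u}(x,t)-\bold u(x)|\to 0$, i.e., $\bold u$ attracts $-u$. This matches the alternative of the corollary and is exactly the content of the second part of Remark~1.2 for $\Phi=0$, where the $\bold v$ of the remark coincides with the present $\bold u$ and $-\bold v=-\bold u$ is the attractor of $u$.

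The main obstacle is organizational rather than analytical: one must import from the stationary Monge--Amp\`ere theory the strictly convex $C^{2+\alpha}$-solution of (\ref{d2}), and must notice the algebraic identity $E_2[-u]=E_2[u]$ peculiar to dimension two, which is what lets Theorem~1.1 be reused on $-u$ in the negative case.
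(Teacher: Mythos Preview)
Your proposal is correct and follows essentially the same route as the paper: invoke the stationary Monge--Amp\`ere theory to produce the strictly convex solution $\bold u$ of (\ref{d2}), then split on the sign of $\Delta u(\cdot,0)$ via Remark~1.2 and feed the appropriate function into Theorem~1.1. The paper's own argument is even terser---it simply notes that (\ref{d2}) has exactly the two solutions $\bold u$ and $-\bold u$ and quotes Remark~1.2 to conclude that $u$ converges to $\bold u$ when $\Delta u(\cdot,0)>0$ and to $-\bold u$ otherwise---but your explicit verification that $E_2[-u]=E_2[u]$ in dimension two is precisely what underlies the second part of that remark and is a useful thing to spell out.
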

The point of Corollary 1.3 is that the problem (\ref{d2}) has exactly two solutions, which are a convex one $\bold u$ and $-\bold u$. According to Remark 1.2, $u(x,t)$  converges to $\bold u$ if $\Delta u(x,0)>0$ and to $-\bold u$ otherwise.

All these may be extended to an arbitrary dimension but formulation of relevant results requires new geometric and algebraic notions, which are collected in Section 2.

One may see now why parabolic Monge -- Ampere equation (\ref{pMA}), $m=n+1$ got excluded from the set of $m$-Hessian equations (\ref{Dp}). The requirement $T_m[\bold u]>0$ is the basis of our further development and $T_{n+1}[u]\equiv0$ by definition.

In Section 3 we formulate and discuss the main result of the paper. The basis of its proof is the appropriate comparison theorem, what presupposes construction of barriers. The stems of these barriers are chosen as solutions of auxiliary linear first order ordinary differential equations and exposed in Section 4. Section 5 contains a proof of theorem from Section 3.

\section{Notations and definitions}
We denote the space of $N\times N$ symmetric matrices by $Sym(N)$ and by $T_p(S)$ the $p$-traces of $S\in Sym(N)$, which are the sum of all principal $p$-minors of $S$, $1\le p\le N$, $T_0(S):=1$.
\begin{Def}
A matrix $S\in Sym(N)$ is $m$-positive if $S\in K_m$,
\begin{equation}K_m=\{S: T_p(S)>0,\quad p=1,\dots ,m\}.\label{K}\end{equation}
\end{Def}
The cones (\ref{K}), $m=1,\dots ,N$ are the basis of the theory of $m$-Hessian partial differential equations and they admit different definitions. Constructive Definition 1.1 has been introduced in the paper
\cite{Iv83},
while one of the first may be extracted from the paper
\cite{G59}
(see also
\cite{FI}, \cite{IPY}).
Namely,
\begin{Def}
A cone $K_m$ is the component of positiveness of function $T_m(S)$ in $Sym(N)$ containing $S=Id$.
\end{Def}
Due to scalar product $(S^1,S^2):=tr(S^1S^2)$, $Sym(N)$ is a metric space with $||S||^2=(S,S)$. In that sense a cone (\ref{K}) is an open set and $T_m(S)=0$ for $S\in\partial K_m$. Hence, by Definition 2.2 the set of non negative definite $N\times N$ matrices belongs to $\bar K_m$ for all $1\le m\le N$.
\vskip .1in
\begin{remark}
One of the most essential results from
\cite{G59}
allows to substitute $Id$ in Definition 2.2 onto an arbitrary matrix $S_0\in K_m$. Moreover, it follows from G{\aa}rding theory that function $T^{\frac{1}{m}}_m$ is concave in $\bar K_m$
\cite{FI}
and as a consequence, $T_m$ is non negative monotone in $\bar K_m$, i.e., if $S^1, S^2\in\bar K_m$, then $T_m(S^1+S^2)\ge T_m(S^1)$.
\end{remark}

Notice that the set of $N$-positive matrices contains all positive-definite matrices and only them.  Although the simplest way to verify positive definiteness of $S\in Sym(N)$ is to apply well known Sylvester criterion. Quite recently the second author discovered generalized version of Sylvester criterion and extended it to $m$-convex matrices, \cite{F14prep}. 

Denote by $S^{<i_1,\dots, i_k>}\in Sym(N-k)$ a matrix, derived from $S\in Sym(N)$ by crossing out rows and columns numbered by $i_1,\dots, i_k$.

\begin{Th}[Sylvester criterion]\label{S}
Let $S\in Sym(N)$. 

(i) Let $i$ be some fixed index, $1\le i\le N$. Assume that $S^{<i>}$ is $(m-1)$-positive. Then the inequality $T_m(S)>0$ is necessary and sufficient for $m$-positiveness of $S$.

(ii)The existence of at least one collection of numbers $(i_1,\dots, i_{m-1})\subset\{i\}_1^N$ such that
\begin{equation}T_m(S)>0,\quad T_{m-k}(S^{<i_1,\dots, i_k>})>0,\quad k=1,\dots,m-1\label{Sc}\end{equation}
is necessary and sufficient for $m$-positiveness of $S$.
\end{Th}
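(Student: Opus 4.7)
My plan is to prove (i) by a deformation along the ray $\tau\mapsto S+\tau e_i e_i^{T}$, and then to obtain (ii) from (i) by induction on $m$. The key computational identity is
\[
T_p(S+\tau e_i e_i^{T})=T_p(S)+\tau\, T_{p-1}(S^{<i>}),\qquad p=1,\dots,N,
\]
proved by splitting the sum that defines $T_p(S+\tau e_i e_i^{T})$ according to whether the index set of the principal minor contains $i$: minors not touching $i$ are unchanged, while each of the remaining ones differs from the corresponding minor of $S$ by $\tau$ times the matching $(p-1)$-minor of $S^{<i>}$, by the matrix determinant lemma applied to a diagonal rank-one perturbation.

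For the nontrivial direction of (i), assume $S^{<i>}\in K_{m-1}$ and $T_m(S)>0$. Then $T_{p-1}(S^{<i>})>0$ for every $p=1,\dots,m$ (with $T_0\equiv 1$), so each function $\tau\mapsto T_p(S+\tau e_i e_i^{T})$ is strictly increasing on $\tau\ge 0$; in particular $T_m$ stays positive along the whole half-ray. For $\tau$ large enough all of $T_1,\dots,T_m$ are simultaneously positive, so $S+\tau e_i e_i^{T}\in K_m$ by Definition 2.1. By Remark 2.3 the cone $K_m$ also admits the description of Definition 2.2 as the connected component of $\{T_m>0\}$ containing $\mathrm{Id}$; the half-ray is connected, lies entirely in $\{T_m>0\}$ and meets $K_m$, hence the whole half-ray is in $K_m$, and at $\tau=0$ we get $S\in K_m$.

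For (ii) I would induct on $m$ (the base $m=1$ is trivial). Sufficiency: if the chain $(i_1,\dots,i_{m-1})$ satisfies $(\ref{Sc})$, the inductive hypothesis applied to $S^{<i_1>}$ with the shortened chain $(i_2,\dots,i_{m-1})$ yields $S^{<i_1>}\in K_{m-1}$, and then part (i) with $i=i_1$ gives $S\in K_m$. Necessity reduces to showing that $S\in K_m$ forces $S^{<i>}\in K_{m-1}$ for every $i$, after which one picks $i_1,i_2,\dots$ one at a time. This last implication uses the same identity applied on the opposite ray $\tau\mapsto S-\tau e_i e_i^{T}$: if $T_{p-1}(S^{<i>})\le 0$ for some $p\le m$, then $T_p$ only increases on that half-ray, so the half-ray would stay in $\{T_p>0\}$ and, by connectedness with $S\in K_p$, inside $K_p$; yet $T_1(S-\tau e_i e_i^{T})=T_1(S)-\tau$ eventually turns negative, contradicting $K_p\subset K_1$.

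The delicate point is sufficiency in (i): the hypotheses bear only on the signs of $T_m(S)$ and of $T_p(S^{<i>})$ for $p<m$, whereas the conclusion demands the signs of $T_p(S)$ for $p<m$, which cannot be read off by purely algebraic manipulation. The bridge is the G{\aa}rding-type reinterpretation of $K_m$ as a connected component (Definition 2.2, justified by Remark 2.3) together with the monotonicity along the rank-one direction $e_i e_i^{T}$ furnished by the identity above.
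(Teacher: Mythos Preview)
The paper does not contain its own proof of this theorem: it is stated with attribution to the preprint \cite{F14prep} (``Quite recently the second author discovered generalized version of Sylvester criterion and extended it to $m$-convex matrices, \cite{F14prep}''), and the text moves on immediately after the remark that the case $m=N$, $(i_1,\dots,i_{N-1})=(N,\dots,2)$ recovers the classical Sylvester criterion. There is therefore nothing in the present paper to compare your argument against.

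That said, your argument is correct and self-contained. The identity $T_p(S+\tau e_ie_i^{T})=T_p(S)+\tau\,T_{p-1}(S^{<i>})$ is exactly the right tool; your use of Definition~2.2/Remark~2.3 (the G{\aa}rding description of $K_m$ as a connected component of $\{T_m>0\}$) to bridge from ``$T_m>0$ along the whole ray and the far end lies in $K_m$'' to ``the whole ray lies in $K_m$'' is legitimate and is precisely the point you flag as delicate. The necessity step in (ii) --- showing that $S\in K_m$ forces $S^{<i>}\in K_{m-1}$ for every $i$ --- is also valid: if some $T_{p-1}(S^{<i>})\le 0$ with $2\le p\le m$, then $T_p$ is nondecreasing along the opposite ray $\tau\mapsto S-\tau e_ie_i^{T}$, so that ray stays in $\{T_p>0\}$, hence in $K_p\subset K_1$ by connectedness with $S\in K_p$, contradicting $T_1(S-\tau e_ie_i^{T})=T_1(S)-\tau\to-\infty$. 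Once $S^{<i>}\in K_{m-1}$ for every $i$, iterating gives that \emph{every} chain $(i_1,\dots,i_{m-1})$ satisfies (\ref{Sc}), which is stronger than what (ii) asks.

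One cosmetic remark: in your necessity argument you should say explicitly that you are again invoking the component description (Definition~2.2 with Remark~2.3) for the cone $K_p$, since the reader might otherwise expect a direct appeal to Definition~2.1.
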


If in (\ref{Sc}) $m=N$, $(i_1\dots i_{N-1})=(N\dots 2)$, Theorem \ref{S} turns into classic Sylvester criterion.

Our further proceeding will be restricted to the special subspace of $Sym(N)$. Namely, we take into consideration the set
\begin{equation}\bold S^{ev}=\{S^{ev}=(s_{kl})_0^n,\quad s_{00}=s,\quad s_{0i}=s_{i0}=0,\quad S=(s_{ij})^n_1\in Sym(n)\}.\label{evS}\end{equation}
In order to emphasize this speciality we introduce on the subspace (\ref{evS}) new notations for traces $T_p$ and cones (\ref{K}):
\begin{equation}E_m(s,S):=T_m(S^{ev})=sT_{m-1}(S)+T_m(S),\quad 1\le m\le n,\label{Em}\end{equation}
\begin{equation}K_m^{ev}=\{s,S: E_p(s,S)>0,\quad p=1,\dots ,m\}.\label{Ke}\end{equation}

The Sylvester criterion for $m$-positive matrices $1<m<N$ carries out a refined version of description of $K_m^{ev}$. Namely,
\begin{equation}K_m^{ev}=\{s,S: E_m(s,S)>0,\quad S\in K_{m-1}\}.\label{rKe}\end{equation}

Let $\Omega\subset R^n$ be a bounded domain, $Q_T=\Omega\times(0;T)$, $\partial ^{\prime\prime}Q_T=\partial\Omega\times[0;T]$, $\partial^{\prime}Q_T=(\Omega\times\{0\})\cup\partial ^{\prime\prime}Q_T$, $u\in C^{2,1}(\bar Q_T)$. We introduce functional analogs of (\ref{evS}), (\ref{Em}), (\ref{Ke}) letting $S^{ev}[u]$ with $s[u]=-u_t,S[u]=u_{xx}$:
\begin{equation}E_m[u]:=T_m(S^{ev}[u])=-u_tT_{m-1}(u_{xx})+T_m(u_{xx}),\quad 1\le m\le n,\label{uEm}\end{equation}
\begin{equation}\bold K_m^{ev}(\bar Q_T)=\{u\in C^{2,1}(\bar Q_T): S^{ev}[u]\in K_m^{ev}, (x,t)\in (\bar Q_T)\},\label{uKe}\end{equation}
where $u_{xx}$ is Hesse matrix of $u$.

Notice that in contrast to matrix cone (\ref{rKe}) a functional cone (\ref{uKe}) is closed on the bounded in $C^{2,1}(\bar Q_T)$ sets of functions. Indeed, let $u\in C^{2,1}(\bar Q_T)$. Then a matrix set $\{S^{ev}[u], u\in C^{2,1}(\bar Q_T)\}$ is compact in the space $Sym(n+1)$. It means that if $u\in\bold K_m^{ev}(\bar Q_T)$ there exists a value $\nu=\nu[u]>0$ such that $E_m[u]\ge\nu, (x,t)\in\bar Q_T$.

\begin{Def}
We say that operator (\ref{uEm}) is the $m$-Hessian evolutionary operator and a function $u\in \bold K_m^{ev}(\bar Q_T)$ is $m$-admissible in $\bar Q_T$ evolution.
\end{Def}
In our applications we need the following consequence of  Remark 2.3 and (\ref{rKe}).
\begin{lemma}
Let $u\in C^{2,1}(\bar Q_T)$. Assume there is a point $x_0\in\Omega$ such that a matrix $u_{xx}(x_0)$ is $(m-1)$-positive and in addition $E_m[u]>0$, $(x,t)\in\bar Q_T$. Then $u\in\bold K_m^{ev}(\bar Q_T)$.
\end{lemma}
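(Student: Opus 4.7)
My plan is to reduce the claim to a connectedness argument based on the refined characterization (\ref{rKe}). Because $K_m^{ev}=\{(s,S):E_m(s,S)>0,\ S\in K_{m-1}\}$ and $E_m[u]>0$ is already assumed throughout $\bar Q_T$, proving $u\in\bold K_m^{ev}(\bar Q_T)$ comes down to verifying the single pointwise property $u_{xx}(x,t)\in K_{m-1}$ for every $(x,t)\in\bar Q_T$.

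I would introduce $A:=\{(x,t)\in\bar Q_T:u_{xx}(x,t)\in K_{m-1}\}$. Since $u\in C^{2,1}(\bar Q_T)$ and $K_{m-1}$ is open in $Sym(n)$, the preimage $A$ is relatively open in $\bar Q_T$; by the hypothesis at $x_0$ it is non-empty; and $\bar Q_T=\bar\Omega\times[0,T]$ is connected. The plan is therefore to show that $A$ is also relatively closed, for then $A=\bar Q_T$ and we are done.

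For closedness I would pick a convergent sequence $(x_n,t_n)\to(x^*,t^*)$ with $u_{xx}(x_n,t_n)\in K_{m-1}$. Continuity of $u_{xx}$ gives $u_{xx}(x^*,t^*)\in\bar K_{m-1}$, so the only way $(x^*,t^*)$ could fail to be in $A$ is $u_{xx}(x^*,t^*)\in\partial K_{m-1}$. By Definition 2.2, $K_{m-1}$ is the connected component of $\{T_{m-1}>0\}$ containing $Id$, so a standard openness/continuity argument forces $T_{m-1}=0$ on $\partial K_{m-1}$. Substituting into (\ref{uEm}) collapses $E_m[u]$ at $(x^*,t^*)$ to $T_m(u_{xx}(x^*,t^*))$, so the hypothesis $E_m[u]>0$ would require a matrix $S^*\in\bar K_{m-1}$ with $T_{m-1}(S^*)=0$ and yet $T_m(S^*)>0$.

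The main obstacle will be ruling out such an $S^*$. I would deduce this from Newton's inequality applied to the real-rooted characteristic polynomial of $S^*\in Sym(n)$, which yields $T_{m-1}(S^*)^2\ge c\,T_{m-2}(S^*)T_m(S^*)$ with an explicit positive combinatorial constant $c$. Combined with $T_{m-2}(S^*)\ge 0$ (which follows from $S^*\in\bar K_{m-1}$), the vanishing of $T_{m-1}(S^*)$ forces $T_m(S^*)\le 0$ whenever $T_{m-2}(S^*)>0$. The fully degenerate subcase $T_{m-2}(S^*)=0$ I would handle either by iterating Newton's inequality downward until reaching a strictly positive trace, or more conceptually by invoking the G\aa{}rding concavity of $T_m^{1/m}$ on $\bar K_m$ recorded in Remark 2.3. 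Either route delivers $T_m(S^*)\le 0$, contradicting $T_m(S^*)>0$ and finishing the argument.
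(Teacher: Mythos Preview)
Your connectedness strategy is exactly the one the paper has in mind, and the reduction via (\ref{rKe}) to the single condition $u_{xx}\in K_{m-1}$ is correct. The only difference is the level at which the open--closed argument is run. The paper's one-line appeal to Remark~2.3 means: $K_m^{ev}$, being $K_m$ in $Sym(n+1)$ intersected with the subspace (\ref{evS}), is itself a full connected component of $\{E_m>0\}$ (Definition~2.2 says $K_m$ is the component of $\{T_m>0\}$ through $Id$, and Remark~2.3 lets one anchor it at $S^{ev}[u](x_0)$ instead). Since $E_m[u]>0$ on the connected set $\bar Q_T$ and $S^{ev}[u](x_0)\in K_m^{ev}$ by (\ref{rKe}), the continuous image stays in that component. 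This bypasses entirely the auxiliary claim ``$T_m\le 0$ on $\partial K_{m-1}$'' that your version needs.

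That auxiliary claim is true, but neither of your two suggested justifications quite lands. Iterating Newton's inequality ``downward'' stalls: once two consecutive traces $T_{k}=T_{k+1}=0$ vanish, the inequality at index $k+1$ reads $0\le 0$ and says nothing about $T_m$. And the G\aa rding concavity of $T_m^{1/m}$ recorded in Remark~2.3 lives on $\bar K_m$, whereas you only know $S^*\in\bar K_{m-1}$. A clean fix, still in the spirit of Remark~2.3, is: if $S^*\in\partial K_{m-1}$ had $T_m(S^*)>0$, then for small $\varepsilon>0$ the matrix $S^*+\varepsilon I$ lies in $K_{m-1}\cap\{T_m>0\}=K_m$, so $S^*\in\bar K_m\setminus K_m=\partial K_m$, forcing $T_m(S^*)=0$ by Definition~2.2 --- a contradiction. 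With this patch your argument is complete and equivalent to the paper's.
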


The development of the theory of Hessian equations has brought out some new notions in differential geometry and
the first description of some may be found in
\cite{CNS85}
as necessary conditions for admissible solvability of the Dirichlet problems. In the papers
\cite{FI}, \cite{IPY}, \cite{Iv12} some versions of these requirements were considered independently of differential equations as the set of new geometric notions. Namely, let $\partial \Omega \in R^n$ be $C^2$-hypersurface with position-vector $X=X(\theta)$ and metric tensor $g[\partial\Omega]=(g_{ij})_1^{n-1},$ $g_{ij}=(X_i,X_j)$, $X_i=\partial X/\partial \theta ^i$. In some vicinity of $M_0\in\partial\Omega$ we introduce the set of matrices $\tau=(\tau _i^j)_1^{n-1}$ such that $g^{-1}=\tau^T\tau $ and denote
\begin{equation}X_{(i)}=X_k\tau ^k_i,\quad X_{(ij)}=X_{kl}\tau ^k_i\tau ^l_j,\quad i,j=1,\dots ,n-1. \label{mf}\end{equation}
Notice that $(X_{(i)},X_{(j)})=\delta _{ij}$ and (\ref{mf}) provides Euclidean moving frames for $\partial\Omega$. The freedom  of choice of $\tau$ supplies rotations  in the tangential plane.

The second item in (\ref{mf}) provides the set of symmetric matrices $\mathcal K[\partial\Omega]$,
\begin{equation}\mathcal K[\partial\Omega]=(\mathcal K_{ij})_1^{n-1},\quad \mathcal K_{ij}=(X_{(ij)},\bold n),\label{cm}\end{equation}
where $\bold n$ is the interior to $\partial\Omega$ normal.
\begin{Def}
We say that a matrix (\ref{cm}) is the curvature matrix of $\partial\Omega$ and functions $\bold k_p(M)=T_p(\mathcal K[\partial\Omega])(M)$, $p=1,\dots, n-1$ are the $p$-curvatures of $\partial\Omega$.
\end{Def}

By construction the curvature matrices are geometric invariant in the sense that theirs eigenvalues are the principal curvatures of $\partial\Omega$. On the other hand, $p$-curvatures are absolute geometric invariants admitting natural numbering by $p$ throughout $\partial\Omega.$ It is also remarkable that if $\partial\Omega$ is $C^{2+k}$-smooth, then $\{\bold k_p\}_1^{n-1}$ are $C^k$-smooth.

Definitions 2.1, 2.7 carry out
\begin{Def}
A closed $C^2$-hypersurface $\Gamma $ is $m$-convex at a point $M$ if its curvature matrix is $m$-positive at this point.
\end{Def}
Notice that $m$-positiveness of the curvature matrix does not depend on parametrization.

It follows from $(\ref{K})$ that Definition 2.8 is equivalent to
\begin{Def}
A closed $C^2$-hypersurface $\Gamma $ is $m$-convex at a point $M$ if the first $p$-curvatures of $\Gamma$ are positive up to $m$ at $M$, i.e.,
\begin{equation}\bold k_p[\Gamma](M)>0,\quad p=1,\dots,m. \label{mp}\end{equation}
\end{Def}
And eventually due to Remark 2.3 we have
\begin{Def}
A closed $C^2$-hypersurface $\Gamma $ is $m$-convex if $\bold k_m[\Gamma]>0$.
\end{Def}
As to the principal curvatures of $\Gamma\subset R^{n+1}$, it is known that at least $m$ of them are positive in the points of $m$-convexity but otherwise it is only true for $m=n$, i.e., for strictly convex hypersurfaces in common sense.

\section{Exposition of the problem}

We rewrite the first initial boundary value problem for the $m$-Hessian evolution equation (\ref{Dp}), (\ref{evo}) in terms (\ref{uEm}):
\begin{equation}E_m[u]=-u_tT_{m-1}(u_{xx})+T_m(u_{xx})=f,\quad u\arrowvert_{\partial^{\prime}Q_T}=\varphi, \quad 1\le m\le n, \label{ib}\end{equation}
and relate to (\ref{ib}) the Dirichlet problem
\begin{equation}T_m[\bold u]=\bold f,\quad x\in\Omega,\quad \bold u\arrowvert_{\partial\Omega}=\Phi.\label{mdp}\end{equation}
The main goal of the paper is to present the proof of
\begin{Th}
Assume that
\vskip .1in
$(i)$  $f\ge\nu_m>0$ and there is a point $x_0\in \Omega$ such that a matrix $\varphi_{xx}(x_0,0)$ is $(m-1)$-positive;
\vskip .1in
$(ii)$ there exists a solution $u\in C^{2,1}(\bar\Omega\times[0;\infty))$ to the problem (\ref{ib});
\vskip .1in
$(iii)$ there exists $m$-admissible in $\bar\Omega$ solution $\bold u$ to the problem (\ref{mdp});
\vskip .1in
$(iv)$ $\lim_{t\rightarrow\infty}|f(x,t)-\bold f(x)|=0, x\in\bar\Omega,\quad\lim_{t\rightarrow\infty}|\varphi(x,t)-\Phi(x)|=0, x\in\partial\Omega$;
\vskip .1in

Then $\quad\lim_{t\rightarrow\infty}|u(x,t)-\bold u(x)|=0,\quad x\in\bar\Omega$.
\end{Th}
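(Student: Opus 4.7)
The plan is to sandwich $u$ between two barriers $v_-\le u\le v_+$ on $\bar\Om\times[T_0,\infty)$, both tending uniformly to $\bold u$ as $t\to\infty$, by applying a parabolic comparison theorem for $m$-Hessian evolution operators. The needed comparison principle (the prerequisite alluded to in the introduction) should be a standard consequence of the concavity of $T_m^{1/m}$ on $\bar K_m$ (Remark~2.3) and the parabolic maximum principle: if $v,u\in\bold K_m^{ev}$, $E_m[v]\le E_m[u]$, and $v\ge u$ on $\pa'(\Om\times[T_0,\infty))$, then $v\ge u$ throughout; reversing inequalities provides the lower-barrier version. A preliminary step is to upgrade hypothesis~$(iv)$ from pointwise to uniform convergence: since $u\in C^{2,1}(\bar\Om\times[0,\infty))$, the families $f(\cdot,t)=E_m[u(\cdot,t)]$ and $\varphi(\cdot,t)$ are equicontinuous in $x$ on the compact sets $\bar\Om$ and $\pa\Om$, and pointwise convergence plus equicontinuity upgrades to uniform, giving a $T_0=T_0(\epsilon)$ with $\sup_{\bar\Om}|f-\bold f|\le\epsilon$ and $\sup_{\pa\Om}|\varphi-\Phi|\le\epsilon$ for $t\ge T_0$.

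Following the hint that stems of the barriers are solutions of auxiliary linear first-order ODEs, I would seek $v_\pm(x,t)=\bold u(x)\pm w(t)h(x)$ with $h\in C^2(\bar\Om)$, $h>0$, an auxiliary profile to be chosen once for all. Expanding $E_m[v_+]$ to leading order in $w$ and imposing $E_m[v_+]\le f$ yields, after dividing by the positive quantity $h\,T_{m-1}(\bold u_{xx})$, an inequality of the form $w'(t)+\lambda(x)w(t)\ge (\bold f(x)-f(x,t))/(h(x)\,T_{m-1}(\bold u_{xx}(x)))$, where $\lambda(x)=-\mathrm{tr}(T_m'(\bold u_{xx})h_{xx})/(h\,T_{m-1}(\bold u_{xx}))$. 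The point is to pick $h$ so that $\lambda(x)\ge\lambda_0>0$ uniformly on $\bar\Om$: then the family of pointwise conditions collapses into the scalar linear ODE $w'+\lambda_0 w=b(t)$, with source $b(t)=O(\epsilon)\to 0$ supplied by Step~1, whose solution satisfies $w(t)\to 0$ by Duhamel. Admissibility $v_\pm\in\bold K_m^{ev}$ will follow from Lemma~2.5 (using $\bold u_{xx}\in K_{m-1}$ strictly and $E_m[v_\pm]>0$ for $w$ small), the initial and boundary data are dominated by choosing $w(T_0)$ large enough, and the comparison theorem then gives $|u-\bold u|\le\max(|w_+|,|w_-|)\,\|h\|_\infty\to 0$.

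The main obstacle is the construction of the profile $h$ with a uniform dissipation $\lambda_0>0$; this is the ingredient that converts the mere pointwise decay $\bold f-f\to 0$ (not assumed integrable in $t$) into a truly decaying upper bound. Indeed, the naive choice $h\equiv 1$ leaves the one-sided constraint $w'T_{m-1}(\bold u_{xx})\ge\bold f-f$, forcing $w$ to be nondecreasing whenever $\bold f>f$ somewhere in $\Om$ and so precluding $w\to 0$. For $m=1$ the required $h$ is essentially the first Dirichlet eigenfunction of $-\Delta$; for general $m$ a positive eigenfunction of the linearization of $T_m$ at $\bold u_{xx}$ weighted by $T_{m-1}(\bold u_{xx})$ should serve, existing by a Krein--Rutman/G\aa{}rding argument in the cone $K_m$. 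The paper's announcement of an explicit first-order ODE suggests that Section~4 bypasses such spectral machinery by a more elementary, geometry-driven construction---likely exploiting the $m$-admissibility of $\bold u$ together with the $m$-convexity of $\pa\Om$ to write $h$ and $\lambda_0$ via a distance-to-$\pa\Om$ or radial profile---which is where the technical core of the argument resides.
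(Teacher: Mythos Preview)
Your comparison--barrier outline and the reduction to a first-order ODE are exactly the paper's strategy, but there is a genuine gap at the point you yourself flag as the ``main obstacle'': the choice of the spatial profile $h$. Your ansatz $v_\pm=\bold u\pm w(t)h(x)$ with a generic $h$ forces you to linearize $E_m$ ``to leading order in $w$'', yet two paragraphs later you need $w(T_0)$ large enough to dominate the initial data. These two requirements are in tension, and nothing in your sketch resolves it. Your speculative candidates for $h$ (a Krein--Rutman eigenfunction of the linearized operator, or a distance-to-boundary profile) would, at best, give a small-$w$ argument and would still leave the large-data case open.

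The paper's key idea, which you miss, is disarmingly simple: take the profile to be $\bold u$ itself, shifted by a constant. The barrier is
\[
V(x,t)=\sigma(t)\bigl(\bold u(x)-A\bigr)+\bold u(x),\qquad A>\sup_\Omega\bold u,
\]
so that $V_{xx}=(1+\sigma)\,\bold u_{xx}$. By homogeneity $T_k\bigl((1+\sigma)\bold u_{xx}\bigr)=(1+\sigma)^k T_k(\bold u_{xx})$, and with the substitution $(1+\sigma)^m=1+m\theta$ one gets the \emph{exact} identity
\[
E_m[V]=(A-\bold u)\,T_{m-1}[\bold u]\,\theta'+m\,T_m[\bold u](\theta+1),
\]
with no remainder. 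In your notation this is precisely the choice $h=A-\bold u>0$, $h_{xx}=-\bold u_{xx}$, for which Euler's relation $\mathrm{tr}\bigl(T_m'(\bold u_{xx})\bold u_{xx}\bigr)=mT_m(\bold u_{xx})$ gives
\[
\lambda(x)=\frac{m\,T_m(\bold u_{xx})}{(A-\bold u)\,T_{m-1}(\bold u_{xx})}\ \ge\ \frac{m\,\nu_m}{(A-\inf_\Omega\bold u)\,\mu_{m-1}}=:b>0
\]
directly from the $m$-admissibility of $\bold u$, with no spectral or geometric machinery. Because the identity is exact, there is no smallness restriction on $\sigma$ (only $\sigma>-1$), so the constant $A$ can be taken large enough to absorb the initial discrepancy, dissolving the tension in your argument. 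The scalar ODE is then simply $\theta'+b(\theta+h)=0$, whose solution decays to $0$ whenever $h(t)\to 0$; this yields the upper barrier (Lemma~5.1), and an entirely symmetric choice with $\sigma>0$ gives the lower barrier (Lemma~5.2).
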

The following proposition shows that assumption $(ii)$ may reduce the situation to the empty set, whatever smooth data in the problem (\ref{ib}) had been.
\begin{Th}
Let assumption $(i)$ be satisfied. Assume in addition that there is a point $x_1\in\Omega$ such that a matrix $\varphi_{xx}(x_1,0)$ is not $(m-1)$-positive. Then there are no solutions to the problem (\ref{ib}) in $C^{2,1}(\bar Q_T)$, whatever small $T>0$ had been.
\end{Th}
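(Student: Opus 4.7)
The plan is a short proof by contradiction that uses Lemma~2.6 to convert the mere existence of a solution into global $(m-1)$-positivity of $u_{xx}$ on the entire parabolic cylinder, which is then tested against the point $x_1$.

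Suppose, for some $T>0$, that a function $u\in C^{2,1}(\bar Q_T)$ solves~(\ref{ib}). From assumption~$(i)$ we have $E_m[u]=f\ge\nu_m>0$ on all of $\bar Q_T$, and there is a point $x_0\in\Omega$ at which $u_{xx}(x_0,0)=\varphi_{xx}(x_0,0)$ is $(m-1)$-positive. These are exactly the hypotheses of Lemma~2.6, applied at the initial slice $t=0$. The lemma therefore yields $u\in\bold K_m^{ev}(\bar Q_T)$, and then the refined description~(\ref{rKe}) of $K_m^{ev}$ forces $u_{xx}(x,t)\in K_{m-1}$ at \emph{every} point of $\bar Q_T$.

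Evaluating this conclusion at $(x_1,0)$ gives $\varphi_{xx}(x_1,0)=u_{xx}(x_1,0)\in K_{m-1}$, which directly contradicts the additional hypothesis that $\varphi_{xx}(x_1,0)$ is not $(m-1)$-positive. Since the contradiction is produced purely on the initial slice and makes no reference to $T$, it rules out solutions in $C^{2,1}(\bar Q_T)$ for every $T>0$. The whole mathematical content of the argument sits inside Lemma~2.6; the only point worth verifying in passing from that lemma to Theorem~3.2 is that its $(m-1)$-positivity hypothesis is permitted to be imposed at a single point of $\bar Q_T$, and the pair $(x_0,0)$ supplied by assumption~$(i)$ is exactly such a point.
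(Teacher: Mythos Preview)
Your proof is correct and is essentially the paper's own argument. The paper dispatches Theorem~3.2 in one line by saying the assumptions are incompatible ``due to Remark~2.3''; since Lemma~2.6 is explicitly introduced in the paper as a consequence of Remark~2.3 and~(\ref{rKe}), you have simply unpacked that citation into the natural contradiction at $(x_1,0)$.
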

Indeed, assumptions of Theorem 3.2 are incompatible due to Remark 2.3.

The principal goal in the theory of differential equations is to look for close to necessary sufficient  conditions, which guarantee a solvability of the problems under consideration. Concerning the problem (\ref{ib}) such conditions were presented in the paper
\cite{IU}
and we formulate a refined version of Theorem 1.2 from there.
\begin{Th},
Assume that $\partial\Omega\in C^{4+\alpha}$, $\bold k_{m-1}[\partial\Omega]>0$, $f>0$, $f\in C^{2+\alpha,1+\alpha/2}(\bar Q_T)$, $\varphi\in C^{4+\alpha,2+\alpha}(\partial^\prime Q_T)$, $\varphi(x,0)\in\bold K_{m-1}(\bar\Omega)$ and $f, \varphi$ satisfy compatibility condition
\begin{equation} -\varphi_t(x,0)T_{m-1}(\varphi_{xx}(x,0))+T_m(\varphi_{xx}(x,0))-f(x,0)=0,\quad x\in\partial\Omega. \label{cc}\end{equation}
Then there exists a unique in $C^{2,1}(\bar Q_T)$ solution $u$ to the problem (\ref{ib}) and $u\in\bold K_m^{ev}(\bar Q_T)$. Moreover, $u\in C^{2+\alpha,1+\alpha/2}(\bar Q_T)$.
\end{Th}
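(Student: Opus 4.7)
The plan is to prove Theorem 3.3 by the continuity method, the standard vehicle for fully nonlinear parabolic equations of $m$-Hessian type. Embed problem (\ref{ib}) into a one-parameter family
\[
E_m[u^{\sigma}] = f^{\sigma},\quad u^{\sigma}\arrowvert_{\partial^{\prime}Q_T} = \varphi,\quad \sigma\in[0,1],
\]
with $f^{0}$ arranged so that $\sigma = 0$ admits an obvious $m$-admissible solution $u^{0} = \tilde\varphi$ (a smooth $m$-admissible extension of $\varphi$ to $\bar Q_T$ consistent with (\ref{cc})) and $f^{1} = f$. Let $\Sigma\subset[0,1]$ be the set of $\sigma$ for which a solution $u^{\sigma}\in \mathbf{K}_{m}^{ev}(\bar Q_T)\cap C^{2+\alpha,1+\alpha/2}(\bar Q_T)$ exists. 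Openness of $\Sigma$ follows from the implicit function theorem applied to the linearization of $E_m$, which is uniformly parabolic on the admissibility class because $u^{\sigma}\in \mathbf{K}_{m}^{ev}(\bar Q_T)$ forces $T_{m-1}(u_{xx}^{\sigma}) \geq \nu > 0$; Schauder theory supplies a neighborhood of each point of $\Sigma$. Closedness reduces to obtaining a priori $C^{2+\alpha,1+\alpha/2}$ bounds uniform in $\sigma$.

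The estimates proceed in the classical hierarchy. The $C^{0}$ bound follows by comparing $u^{\sigma}$ with $m$-admissible barriers built from quadratic functions of the distance to $\partial\Omega$, exploiting monotonicity of $T_m$ on $\bar K_m$ (Remark 2.3). Interior $C^{1}$ estimates come from the maximum principle applied to $|u_{x}^{\sigma}|^{2}$ after differentiating the equation in a spatial direction and using concavity of $T_m^{1/m}$; boundary gradient bounds rest on tangent-normal barriers built from $\varphi$ and the distance function.

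The $C^{2}$ estimate is the principal technical difficulty. The interior part follows by differentiating the equation twice and controlling $\sup_{\bar Q_T}(|u_{xx}^{\sigma}| + |u_{t}^{\sigma}|)$ via a Krylov-type auxiliary function together with concavity (Remark 2.3). The boundary second-order bound is the heart of the matter: tangential-tangential second derivatives along $\partial^{\prime\prime}Q_T$ are read off from $\varphi_{xx}$, but mixed tangential-normal and pure normal components require barriers whose construction uses the curvature matrix $\mathcal{K}[\partial\Omega]$. This is exactly where the hypothesis $\mathbf{k}_{m-1}[\partial\Omega] > 0$ becomes indispensable; the compatibility condition (\ref{cc}) simultaneously anchors $u_{t}$ on the parabolic corner $\partial\Omega\times\{0\}$ so that the second-order barriers close.

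Once the $C^{2}$ bound is secured, concavity of $T_m^{1/m}$ on $\bar K_m$ permits invocation of the parabolic Evans--Krylov theorem, upgrading to $C^{2+\alpha,1+\alpha/2}$ with constants uniform in $\sigma$; closedness of $\Sigma$ follows, $\Sigma = [0,1]$, and existence is proved. Uniqueness is a standard comparison argument within $\mathbf{K}_{m}^{ev}(\bar Q_T)$: the difference of two admissible solutions satisfies a linear uniformly parabolic equation obtained by integrating the derivatives of $E_m$ along the segment joining $S^{ev}[u^{1}]$ to $S^{ev}[u^{2}]$, and the parabolic maximum principle forces identity. I expect the boundary $C^{2}$ estimate, and specifically the construction of the admissible normal-normal barriers exploiting $\mathbf{k}_{m-1}[\partial\Omega] > 0$, to be the main obstacle.
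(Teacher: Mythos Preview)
The paper does not prove Theorem 3.3 at all: it is stated without proof and attributed to \cite{IU}, introduced explicitly as ``a refined version of Theorem 1.2 from there.'' So there is no proof in the present paper for your proposal to be compared against.

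Your sketch is a reasonable outline of the continuity-method argument one would expect in \cite{IU}: deformation from a known admissible solution, openness via linearization and parabolic Schauder theory, closedness via the hierarchy of $C^0$, $C^1$, $C^2$ a priori estimates followed by Evans--Krylov, and uniqueness by comparison. The identification of the boundary second-derivative estimate as the place where $\mathbf{k}_{m-1}[\partial\Omega]>0$ enters is correct in spirit. But since the paper itself merely quotes the result, the appropriate response here is simply to cite \cite{IU} rather than to reconstruct the proof.
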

It follows from Theorem 3.2 that condition $\varphi(x,0)\in\bold K_{m-1}(\bar\Omega)$ is necessary for solvability of the problem (\ref{ib}) in $C^{2,1}(\bar Q_T)$ for odd $n$. In even dimensions it may be substituted on $-\varphi(x,0)\in\bold K_{m-1}(\bar\Omega)$ and in this sense is also necessary.

The analog of Theorem 3.3 for a stationary problem (\ref{mdp}) has been proved in the paper
\cite{CNS85},
Theorem 1.2. We reformulate it in our terminology.
\begin{Th}
Let $\partial\Omega\in C^{4+\alpha}$, $\bold f\in C^{2+\alpha}(\bar\Omega)$, $\Phi(x)\in C^{4+\alpha}(\partial\Omega)$. Assume that $\bold k_{m-1}[\partial\Omega]>0$, $\bold f>0$. Then there exists a unique $m$-admissible solution $\bold u$ to the problem (\ref{mdp}).
\end{Th}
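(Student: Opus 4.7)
The plan is to combine the continuity method with a complete set of a priori $C^{2,\alpha}(\bar\Omega)$ bounds for $m$-admissible solutions, and to close uniqueness by a comparison argument internal to the class $\bar K_m$. I would fix a reference $m$-admissible $\bold u^0\in C^{4+\alpha}(\bar\Omega)$ (e.g.\ $A(|x-x_*|^2-R^2)$ with $x_*\in\Omega$ and $A$ large) and consider the family
\begin{equation}
T_m[\bold u^\tau]=(1-\tau)T_m[\bold u^0]+\tau\bold f,\qquad \bold u^\tau|_{\partial\Omega}=(1-\tau)\bold u^0|_{\partial\Omega}+\tau\Phi,\qquad \tau\in[0,1].
\end{equation}
The set $I\subset[0,1]$ of parameters for which an $m$-admissible $C^{4+\alpha}$-solution exists contains $\tau=0$. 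Openness of $I$ follows from the implicit function theorem applied to the linearisation $a^{ij}\partial_{ij}$ with $a^{ij}=\partial T_m/\partial s_{ij}(\bold u^\tau_{xx})$, which is strictly elliptic on admissible matrices by Remark 2.3; closedness, and hence $I=[0,1]$, reduces to uniform $C^{2,\alpha}$ bounds along the family.

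These a priori bounds are built in the four standard layers. A $C^0$ estimate comes from comparison with paraboloids, using the monotonicity of $T_m$ on $\bar K_m$ (Remark 2.3). A $C^1$ bound reduces to a boundary gradient estimate, since on any admissible $\bold u$ one has $\Delta\bold u=T_1[\bold u]>0$ and therefore $|\nabla\bold u|$ attains its maximum on $\partial\Omega$; this boundary gradient is controlled by explicit upper and lower barriers built from $\Phi$ and the signed distance to $\partial\Omega$. The $C^2$ estimate splits into an interior part, handled by differentiating the equation twice and testing an auxiliary function of Pogorelov type such as $\log\lambda_{\max}(\bold u_{xx})+\eta\cdot\nabla\bold u+A|x|^2$ against the linearised operator (exploiting concavity of $T_m^{1/m}$), and a boundary part. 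Tangential and mixed boundary second derivatives follow by differentiating the Dirichlet condition along $\partial\Omega$; the doubly-normal boundary second derivative is the real obstacle, and this is precisely where the hypothesis $\bold k_{m-1}[\partial\Omega]>0$ becomes indispensable: one constructs a barrier whose tangential Hesse matrix on $\partial\Omega$ inherits $(m-1)$-positivity from $\mathcal K[\partial\Omega]$, which combined with the equation pins down $\bold u_{\bold n\bold n}|_{\partial\Omega}$ from both sides.

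Once $|\bold u|_{C^2(\bar\Omega)}$ is under control the equation becomes uniformly elliptic on the solution, and the concavity of $T_m^{1/m}$ on $\bar K_m$ (Remark 2.3) lets the Evans--Krylov theorem promote the bound to $|\bold u|_{C^{2,\alpha}(\bar\Omega)}$, closing the continuity method and yielding an $m$-admissible solution $\bold u\in C^{4+\alpha}(\bar\Omega)$ of (\ref{mdp}). Uniqueness then follows by subtraction: if $\bold u,\bold v$ are two $m$-admissible solutions, $\bold w=\bold u-\bold v$ satisfies
\begin{equation}
a^{ij}(x)\bold w_{ij}=0,\qquad a^{ij}(x)=\int_0^1\frac{\partial T_m}{\partial s_{ij}}\bigl(s\bold u_{xx}+(1-s)\bold v_{xx}\bigr)\,ds,
\end{equation}
with $\bold w|_{\partial\Omega}=0$, and the coefficient matrix $(a^{ij})$ is positive definite on admissible matrices by Remark 2.3, so the classical maximum principle forces $\bold w\equiv 0$. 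The hard step throughout is the doubly-normal boundary second derivative estimate, which is the one place where the geometric hypothesis $\bold k_{m-1}[\partial\Omega]>0$ enters but cannot be dispensed with.
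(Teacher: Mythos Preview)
The paper does not give its own proof of this theorem: it is stated as a reformulation, in the paper's terminology, of Theorem~1.2 of Caffarelli--Nirenberg--Spruck \cite{CNS85}, and is quoted without argument. Your proposal is a faithful outline of precisely the CNS85 scheme --- continuity method, $C^0$/$C^1$ estimates via comparison and subharmonicity, interior and boundary $C^2$ estimates with the doubly-normal derivative controlled through the $(m-1)$-convexity of $\partial\Omega$, Evans--Krylov to reach $C^{2,\alpha}$, and uniqueness by the linearised maximum principle --- so there is nothing to compare: you are reconstructing the very proof the paper is citing.
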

It was discovered in
\cite{CNS85}
that a requirement $\bold k_{m-1}[\partial\Omega]>0$ is necessary for solvability of the problem (\ref{mdp}) if $\bold\Phi=const$ (see also
\cite{FI} ).
More precisely, if there is a point $M_0\in\partial\Omega$ such that $\bold k_{m-1}[\partial\Omega](M_0)<0$, there are no solutions to $m$-Hessian equation (\ref{mdp}) attaining constant Dirichlet condition in $C^2(\bar\Omega)$.
To demonstrate this speciality for $m$-Hessian evolution equations we formulate a non existence result.
\begin{Th}
Let $\partial\Omega\in C^{4+\alpha}$, $f\in C^{2+\alpha,1+\alpha/2}(\bar Q_T)$, $f\ge\nu_m>0$, $\varphi\in  C^{4+\alpha,2+\alpha}(\partial^\prime Q_T)$, $\varphi(x,0)\in\bold K_{m-1}(\bar\Omega)$, $1< m\le n$.

Assume there are $x_0\in\partial\Omega$, $t_0\in (0;T)$, $r>0$ such that $\bold k_{m-1}[\partial\Omega](x_0)<0$, $\varphi_t(x_0,t_0)\ge0$,  $\varphi(x,t_0)=C=const$ in $B_r(x_0)$.

Then there are no $C^{2,1}$-solutions  to the problem (\ref{ib}), (\ref{cc}) for $t\ge t_0$.
\end{Th}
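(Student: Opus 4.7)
My plan is to obtain a pointwise contradiction at $(x_0,t_0)$, adapting the Caffarelli--Nirenberg--Spruck non-existence argument for the stationary problem \eqref{mdp} with constant boundary data on a non-$(m-1)$-convex boundary portion.

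The first step is an admissibility upgrade at the point $(x_0,t_0)$. By Lemma~2.6 together with $f\ge\nu_m>0$ and $\varphi(\cdot,0)\in\bold K_{m-1}(\bar\Omega)$, any $C^{2,1}$-solution lies in $\bold K_m^{ev}(\bar Q_T)$, so $T_{m-1}(u_{xx})>0$ on $\bar Q_T$. Using $u_t(x_0,t_0)=\varphi_t(x_0,t_0)\ge 0$ and evaluating \eqref{ib} at $(x_0,t_0)$,
\[
T_m(u_{xx}(x_0,t_0))=f(x_0,t_0)+u_t(x_0,t_0)\,T_{m-1}(u_{xx}(x_0,t_0))\ge\nu_m>0,
\]
and combined with $u_{xx}(x_0,t_0)\in K_{m-1}$ this yields $u_{xx}(x_0,t_0)\in K_m$---a strict gain over the ambient cone, which is the whole parabolic content of the argument.

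The second step extracts the rigid tangential structure forced by the locally constant boundary data. Since $u(\cdot,t_0)\equiv C$ on $B_r(x_0)\cap\partial\Omega$, differentiating this identity twice along $\partial\Omega$ and passing to the orthonormal frame~\eqref{mf} at $x_0$ gives
\[
u_{(ij)}(x_0,t_0)=-u_{\bold n}(x_0,t_0)\,\mathcal K_{ij}[\partial\Omega](x_0),
\]
so the tangential $(n{-}1)\times(n{-}1)$ principal submatrix of $u_{xx}(x_0,t_0)$ equals $\mathcal T=-u_{\bold n}(x_0,t_0)\,\mathcal K(x_0)$.

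The final step combines the previous two. The standard G{\aa}rding-cone fact (a consequence of Theorem~\ref{S}(i) and Remark~2.3, or equivalently of $\partial T_m(A)/\partial a_{ii}=T_{m-1}(A^{<i>})$ together with Cauchy interlacing) is that every principal submatrix of a matrix in $K_m$ lies in $K_{m-1}$. Hence $\mathcal T\in K_{m-1}$, and in particular
\[
0<T_{m-1}(\mathcal T)=(-u_{\bold n}(x_0,t_0))^{m-1}\,\bold k_{m-1}[\partial\Omega](x_0).
\]
With $\bold k_{m-1}(x_0)<0$ the right-hand side is nonpositive whenever $m-1$ is even, an immediate contradiction. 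For $m-1$ odd the sign of $(-u_{\bold n})^{m-1}$ is not fixed by the single trace inequality, and one must activate the whole chain $T_p(\mathcal T)=(-u_{\bold n})^p\,\bold k_p(x_0)>0$, $p=1,\dots,m-1$, together with the Newton--Maclaurin-type inequalities on $\bar K_{m-1}$ (Remark~2.3) and the block decomposition of $u_{xx}(x_0,t_0)\in K_m$ along the tangent--normal splitting, to close the contradiction.

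The main obstacle is precisely this last parity case: when $m-1$ is odd the single sign check on $T_{m-1}(\mathcal T)$ is not decisive, and one must harness the full G{\aa}rding-cone structure of $K_{m-1}$ (applied to $-\mathcal K(x_0)$) jointly with the block decomposition of $u_{xx}(x_0,t_0)$ to turn the local hypothesis $\bold k_{m-1}(x_0)<0$ into a genuine obstruction rather than a mere parity mismatch. The remaining ingredients---the parabolic upgrade and the tangential Hessian identity---are a direct parabolic transcription of the stationary CNS85 argument.
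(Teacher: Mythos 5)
Your strategy coincides with the paper's own proof: the paper also (i) upgrades the solution to $\bold K_m^{ev}(\bar Q_{t_0})$, reads off $T_m(u_{xx})(x_0,t_0)>0$ from the equation using $\varphi_t(x_0,t_0)\ge0$, $T_{m-1}(u_{xx})>0$ and $f>0$, so that $u(\cdot,t_0)$ is $m$-admissible near $x_0$, and then (ii) observes that $\partial\Omega\cap B_{r_1}(x_0)$ is a level surface of an $m$-admissible function and quotes the $(m-1)$-convexity of such level surfaces to contradict $\bold k_{m-1}[\partial\Omega](x_0)<0$. Your first two steps are a correct unpacking of (i) and of the tangential Hessian identity that underlies (ii).

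The gap is exactly where you place it, and it cannot be closed by the tools you name. From $u_{xx}(x_0,t_0)\in K_m$ and $u_{(ij)}=-u_{\bold n}\mathcal K_{ij}$ you get $\mathcal T=-u_{\bold n}\mathcal K(x_0)\in K_{m-1}$, hence $u_{\bold n}(x_0,t_0)\ne0$ and \emph{either} $\mathcal K(x_0)\in K_{m-1}$ (if $u_{\bold n}<0$) \emph{or} $-\mathcal K(x_0)\in K_{m-1}$ (if $u_{\bold n}>0$); only the first branch contradicts $\bold k_{m-1}(x_0)<0$. So the entire content of the final step is the sign $u_{\bold n}(x_0,t_0)<0$, i.e.\ that $u(\cdot,t_0)$ decreases from its boundary level $C$ into $\Omega$ at $x_0$. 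No Newton--Maclaurin inequality or block decomposition of $u_{xx}(x_0,t_0)$ can produce this sign, because the relation $\mathcal T\in K_{m-1}$ is invariant under the simultaneous flip $(u_{\bold n},\mathcal K)\mapsto(-u_{\bold n},-\mathcal K)$: the local algebra at the point is perfectly symmetric between the two branches. A concrete illustration for $n=m=2$: the convex function $|x|^2$ is constant on the inner circle of an annulus, where $\bold k_1<0$ with respect to the interior normal; there $u_{\bold n}>0$ and the second branch occurs, so pointwise data of the kind you use cannot discriminate. What is actually needed is the orientation information carried by the level-surface result the paper cites (the level set of an $m$-admissible function is $(m-1)$-convex with respect to the normal $-Du/|Du|$), together with an identification of that normal with the interior normal of $\Omega$ at $x_0$ --- a maximum-principle/Hopf-type statement about the subharmonic function $u(\cdot,t_0)$, not an algebraic one. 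Your argument is therefore complete for $m$ odd (where $(-u_{\bold n})^{m-1}>0$ automatically and the single trace inequality suffices), but genuinely incomplete for $m$ even.
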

\begin{proof} Indeed, suppose the contrary and there exists a solution $u\in C^{2,1}(\bar Q_{t_0})$. Then it is a unique and with necessity $u\in\bold K_m^{ev}(\bar Q_{t_0})$. Hence, $u(x,t_0)\in\bold K_{m-1}(\bar\Omega)$. But it follows from equation (\ref{ib}) that $T_m(u_{xx})(x_0,t_0)>0$ and by continuity $u(x,t_0)\in\bold K_m(\bar\Omega)\cap B_{r_1}(x_0)$ with some $0<r_1<r$. Moreover, $\partial\Omega\cap B_{r_1}(x_0)$ is a level surface of $m$-admissible function due to assumption $\varphi(x,t_0)=C$ in $B_r(x_0)$ and therefore $\bold k_{m-1}[\partial\Omega](x_0)>0$. This contradiction proves Theorem 3.5.
\end{proof}
\vskip .1in

The crucial step to establish classic solvability of fully nonlinear second order differential equations is to construct a priori estimates of solutions on the basis of various comparison principals. In order to prove Theorem 3.1 we follow this pattern and involve in our reasoning the simplest one.
\begin{Th}
Let functions $v,w\in C(\bar Q_T)$ and $v\in C^{2,1}(Q_T)$, $w\in\bold K_m^{ev}( Q_T)$. Assume that
\begin{equation}E_m[v]-E_m[w]\le0,\quad(x,t)\in Q_T,\quad(w-v)\arrowvert_{\partial^\prime Q_T}\le0.\label{cth}\end{equation}
Then $w\le v$ in $\bar Q_T$.
\end{Th}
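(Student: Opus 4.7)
The plan is to argue by contradiction with the aid of a linear-in-$t$ perturbation that breaks ties, and then to derive a contradiction from the monotonicity of $T_m$ on $\bar K_m$ (Remark 2.3) at an interior maximum of $w-v$. Suppose, towards contradiction, that $M:=\sup_{\bar Q_T}(w-v)>0$. For each $\epsilon\in(0,M/T)$ set $w_\epsilon(x,t):=w(x,t)-\epsilon t$; then $(w_\epsilon-v)|_{\partial^\prime Q_T}\le 0$, while $\sup_{\bar Q_T}(w_\epsilon-v)\ge M-\epsilon T>0$. Hence the supremum of $w_\epsilon-v$ is attained at some point $(x_0,t_0)$ with $x_0\in\Omega$ and $t_0\in(0,T]$; by the standard reduction of replacing $T$ with $T-\delta$ and letting $\delta\to 0$ at the end, we may assume $t_0\in(0,T)$.

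At this interior maximum the first- and second-order conditions $(w_\epsilon-v)_t(x_0,t_0)=0$ and $(w_\epsilon-v)_{xx}(x_0,t_0)\le 0$ translate, in the notation of (\ref{evS}), into $s[v]=s[w_\epsilon]$ and $S[v]\ge S[w_\epsilon]$. Consequently the matrix $S^{ev}[v]-S^{ev}[w_\epsilon]$ is block-diagonal with zero $(0,0)$-entry and positive semi-definite lower block, hence itself positive semi-definite; as noted after Definition~2.2 it therefore lies in $\bar K_m$. Moreover, $S^{ev}[w_\epsilon]=S^{ev}[w]+\epsilon E_{00}$ remains in $K_m^{ev}$: for every $p=1,\dots,m$,
$$E_p(s[w_\epsilon],S[w_\epsilon])\;=\;E_p[w]+\epsilon\, T_{p-1}(w_{xx})\;>\;0,$$
the first summand being positive because $w\in\bold K_m^{ev}(Q_T)$ and the second because $w_{xx}\in K_{m-1}$ by the refined description (\ref{rKe}) (with $T_0\equiv 1$ covering $p=1$).

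Applying Remark~2.3 to the splitting $S^{ev}[v]=S^{ev}[w_\epsilon]+(S^{ev}[v]-S^{ev}[w_\epsilon])$ at $(x_0,t_0)$ yields
$$E_m[v](x_0,t_0)\;\ge\;E_m[w_\epsilon](x_0,t_0)\;=\;E_m[w](x_0,t_0)+\epsilon\, T_{m-1}(w_{xx})(x_0,t_0)\;>\;E_m[w](x_0,t_0),$$
which contradicts the hypothesis $E_m[v]\le E_m[w]$. Hence $w_\epsilon\le v$ on $\bar Q_T$ for every $\epsilon>0$, and letting $\epsilon\to 0$ gives $w\le v$.

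The only point requiring genuine care — and in that sense the main obstacle — is arranging the perturbation so that every matrix entering the application of Remark~2.3 stays in $\bar K_m$, while \emph{strictly} improving $E_m[w]$. The choice $w_\epsilon=w-\epsilon t$ is tailored to this: it shifts only the $(0,0)$-entry of $S^{ev}[w]$ upward, which preserves membership in $K_m^{ev}$ via (\ref{rKe}) and supplies the strict gain through the factor $T_{m-1}(w_{xx})>0$ that is guaranteed by $m$-admissibility of $w$.
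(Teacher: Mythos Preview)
Your proof is correct and follows essentially the same approach as the paper: perturb $w$ to $w-\epsilon t$, check that this preserves $m$-admissibility while strictly increasing $E_m$, and then use monotonicity of $T_m$ on $\bar K_m$ (Remark~2.3) at an interior maximum of $w_\epsilon-v$ to contradict $E_m[v]\le E_m[w]$. The only cosmetic difference is that the paper does not frame the argument as a contradiction on $M>0$ and simply shows $w^\varepsilon-v\le0$ directly; it also handles the boundary case $t_0=T$ by observing that at a maximum in $\bar Q_T\setminus\partial'Q_T$ one has $S^{ev}_0[v-w^\varepsilon]\ge\mathbf 0$ (with $\ge$ rather than $=$ in the $(0,0)$-entry), whereas you invoke the standard $T\mapsto T-\delta$ reduction to force a genuinely interior maximum.
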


\begin{proof} Consider an auxiliary function $w^\varepsilon=w-\varepsilon t$, $\varepsilon>0$ and compute \begin{equation}T_{m-1}[w^\varepsilon]=T_{m-1}[w]>0,\quad E_m[w^\varepsilon]=\varepsilon T_{m-1}[w]+E_m[w]>E_m[w],\quad (x,t)\in Q_T.\label{aE}\end{equation}
Since $w$ is $m$-admissible evolution and due to (\ref{rKe}), (\ref{uEm}), $w^\varepsilon\in\bold K_m^{ev}( Q_T)$. Moreover, it follows from (\ref{cth}), (\ref{aE}) that for $w^\varepsilon$ we have
\begin{equation}E_m[v]-E_m[w^\varepsilon]<0,\quad(x,t)\in Q_T,\quad(w^\varepsilon-v)\arrowvert_{\partial^\prime Q_T}\le0.\label{ast}\end{equation}
Assume that
\begin{equation}\sup_{Q_T}(w^\varepsilon-v)=(w^\varepsilon-v)(x_0,t_0),\quad (x_0,t_0)\in\bar Q_T\backslash\partial^\prime Q_T\label{sp}\end{equation}
and denote $S^{ev}[^.](x_0,t_0)=S^{ev}_0[^.]$. Our choice (\ref{sp}) implies $S_0^{ev}[v-w^\varepsilon]\ge\bold 0$ and since $K^{ev}_m$ is a convex set,  $S^{ev}_0[v]=S^{ev}_0[w^\varepsilon]+S^{ev}_0[v-w^\varepsilon]\in K_m^{ev}$. Then monotonicity of $E_m$ in $K_m^{ev}$ (see Remark 2.3) carries out $E_m[v]-E_m[w^\varepsilon]\ge0$, what contradicts to the first inequality in (\ref{ast}) and assumption (\ref{sp}) is impossible. That means $(x_0,t_0)$ belongs to $\partial^\prime Q_T$ and the second inequality in (\ref{ast}) is valid for all $(x,t)\in\bar Q_T$. This is equivalent to $w-v\le\varepsilon T$ and tending $\varepsilon$ to zero we conclude Theorem 3.6.
\end{proof}
\vskip .1in
There are two possible applications of Theorem 3.6. The first is to construct $v$ satisfying (\ref{cth}) and to estimate $m$-admissible evolution $w$ from above. In this case we say that $v$ is an upper barrier for $w$. Otherwise, $w$ is a lower barrier for $v$.

\section{Auxiliary functions $\theta$, $\sigma$, V}

Denote $g(0)$ by $g_0$ for any function $g$ in Sections 4 and 5. Consider the Cauchy problem for linear ordinary differential equation
\begin{equation}\theta^\prime+b(\theta+h)=0,\quad t\ge0,\quad\theta(0)=\theta_0,\quad b=const>0.\label{od}\end{equation}
Then
\begin{equation}\theta=\exp(-bt)\left(\theta_0-b\int_0^t\exp(b\tau)h(\tau)d\tau\right).\label{th}\end{equation}
The following two propositions are obvious.
\begin{lemma}
Let $\theta$ be a solution to equation (\ref{od}) with  $h=h^+>0$, $(h^+)^\prime(t)\le0$, $h^+_0<\frac{1}{m}$. Assume that $\theta_0+h^+_0\le0$. Then
\begin{equation}\theta(t)+h^+(t)\le0,\quad\theta^\prime(t)\ge0.\label{1th}\end{equation}
If in addition $\lim_{t\rightarrow\infty}h^+(t)=\overline h^+$, then $\lim_{t\rightarrow\infty}\theta(t)=-\overline h^+$,  $\lim_{t\rightarrow\infty}\theta^\prime(t)=0$.
\end{lemma}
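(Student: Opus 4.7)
The plan is to reduce everything to monotonicity of the auxiliary function $\psi(t) := \theta(t) + h^+(t)$, which transforms the linear ODE (\ref{od}) into one with a non-positive forcing term.

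First I would differentiate $\psi$ and substitute from (\ref{od}) to obtain
\begin{equation}
\psi'(t) + b\psi(t) = (h^+)'(t) \le 0, \qquad \psi(0) = \theta_0 + h^+_0 \le 0. \notag
\end{equation}
Multiplying by the integrating factor $e^{bt}$ gives $(e^{bt}\psi)' \le 0$, so $e^{bt}\psi(t) \le \psi(0) \le 0$ for all $t \ge 0$. This yields the first inequality in (\ref{1th}), namely $\theta + h^+ \le 0$. Substituting back into (\ref{od}) gives $\theta' = -b\psi \ge 0$, which is the second inequality in (\ref{1th}).

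For the asymptotic part, the hypothesis $(h^+)' \le 0$ makes $h^+$ non-increasing, so $h^+(t) \ge \overline{h}^+$ for all $t$. Together with $\theta \le -h^+$, this shows $\theta(t) \le -\overline{h}^+$. Since $\theta' \ge 0$, the function $\theta$ is non-decreasing and bounded above, hence admits a limit $\overline\theta \le -\overline{h}^+$ as $t \to \infty$. Passing to the limit in $\theta' = -b(\theta + h^+)$ shows that $\theta'$ itself tends to a limit, namely $-b(\overline\theta + \overline{h}^+)$. Since $\theta$ converges at $+\infty$ and $\theta'$ has a limit there, that limit must be $0$, giving simultaneously $\overline\theta = -\overline{h}^+$ and $\lim_{t\to\infty}\theta'(t) = 0$.

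There is no real obstacle here: the proof is an ODE exercise, and the role of the bound $h^+_0 < 1/m$ is presumably only to make $h^+$ usable in the later Hessian-specific arguments rather than in this lemma itself. The one point to be careful about is justifying the passage to the limit $\lim \theta' = 0$; the cleanest argument is exactly the one above, namely that existence of both $\lim \theta$ and $\lim \theta'$ forces the latter to vanish.
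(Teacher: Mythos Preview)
Your argument is correct in all parts. The integrating-factor step for $\psi=\theta+h^+$ is the natural way to obtain (\ref{1th}), and your asymptotic argument is clean: once $\theta$ is monotone and bounded above by $-\overline h^+$, the existence of $\lim\theta'$ via the ODE forces that limit to be $0$ and hence $\overline\theta=-\overline h^+$.

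There is nothing to compare with in the paper: Lemmas 4.1 and 4.2 are stated there without proof (the authors simply write ``The following two propositions are obvious''), so your write-up is a legitimate expansion of what the paper leaves implicit. Your remark about the hypothesis $h^+_0<\tfrac{1}{m}$ is also accurate: it plays no role in this lemma and is only used afterwards to ensure that $1+m\theta^+>0$ when defining $\sigma^+$ through (\ref{ths}).
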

\begin{lemma}
Assume that $h=-h^-<0$, $(h^-)^\prime(t)\le0$, $\theta_0-h^-_0\ge0$. Then
\begin{equation}\theta(t)- h^-(t)\ge0,\quad\theta^\prime(t)\le0.\label{2th}\end{equation}
If in addition $\lim_{t\rightarrow\infty}h^-(t)=\underline h^-$, then $\lim_{t\rightarrow\infty}\theta(t)=\underline h^-$,  $\lim_{t\rightarrow\infty}\theta^\prime(t)=0$.
\end{lemma}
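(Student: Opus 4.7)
The plan is to reduce the first claim to a scalar linear ODE inequality for the auxiliary quantity $\psi(t) := \theta(t) - h^-(t)$. Substituting $h = -h^-$ in (\ref{od}) gives $\theta'(t) = -b\,\psi(t)$, so differentiating $\psi$ and using $(h^-)'(t) \le 0$ yields
\[
\psi'(t) + b\,\psi(t) = -(h^-)'(t) \ge 0,
\]
while $\psi(0) = \theta_0 - h^-_0 \ge 0$ by hypothesis. Multiplying by the integrating factor $e^{bt}$ one obtains $(e^{bt}\psi(t))' \ge 0$, whence $e^{bt}\psi(t) \ge \psi(0) \ge 0$; in particular $\theta(t) - h^-(t) = \psi(t) \ge 0$. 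The second inequality is then immediate from (\ref{od}), since $\theta'(t) = -b\,\psi(t) \le 0$.

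For the asymptotic part, I would exploit the monotonicity just obtained: $\theta$ is non-increasing, and the bound $\theta(t) \ge h^-(t) \ge \underline h^-$ (the latter because $h^-$ is itself non-increasing with limit $\underline h^-$) shows that $\theta$ is bounded below, so $\theta(t) \to L$ for some $L \ge \underline h^-$. Passing to the limit in $\theta'(t) = -b(\theta(t) - h^-(t))$ gives $\theta'(t) \to -b(L - \underline h^-)$; if this limit were strictly negative, $\theta$ would eventually decrease linearly and diverge to $-\infty$, contradicting convergence. Hence $L = \underline h^-$ and $\theta'(t) \to 0$.

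The argument contains no substantial obstacle; Lemma 4.2 is the mirror image of Lemma 4.1 and could alternatively be derived from it via the substitution $\tilde\theta := -\theta$, $\tilde h := -h^-$, which converts the present negatively-signed data into the positively-signed form already treated (the extra constraint $h^+_0 < 1/m$ in Lemma 4.1 is not needed for the conclusions used here, so no information is lost). I therefore expect the authors' own proof to follow exactly the two-step pattern above: establish the sign of $\theta - h^-$ by an integrating-factor inequality, then combine the resulting monotonicity, the lower bound $\underline h^-$, and the ODE itself to pin down both asymptotic limits. The only bookkeeping item requiring care is the sign flip in the hypothesis $h = -h^- < 0$ when comparing with the formula (\ref{th}).
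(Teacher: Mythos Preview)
Your argument is correct in every detail: the integrating-factor inequality for $\psi=\theta-h^-$ gives the sign, the ODE then gives $\theta'\le0$, and the monotone-convergence plus ``limit of derivative must be zero'' step pins down the asymptotics. Note, however, that the paper itself supplies no proof at all for this lemma (nor for Lemma~4.1); the authors simply declare both propositions ``obvious'' and move on, so there is no approach to compare against---you have written out exactly the routine verification they left to the reader.
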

Now we introduce the second auxiliary function $\sigma=\sigma(t)$ via the equality
\begin{equation}(1+\sigma)^m=1+m\theta,\quad 1\le m\le n.\label{ths}\end{equation}
Denote $\theta$ from Lemma 4.1 by $\theta^+$ and the relevant solution of equation (\ref{ths}) by $\sigma^+$. Due to assumption $h^+_0<\frac{1}{m}$ function $\sigma^+$ is well defined and it follows from (\ref{ths}), (\ref{1th}) that
\begin{equation}m\theta^+(t)\le\sigma^+(t)<\theta^+(t)<0,\quad(\sigma^+(t))^\prime>0.\label{ts1}\end{equation}

In the situation of Lemma 4.2 we keep to notations $\theta^-$, $\sigma^-$. The analog of relations (\ref{ts1}) in this case sequels (\ref{ths}), (\ref{2th}) and if $\theta_0\ge1$ reads as
\begin{equation}0<\frac{(1+m\theta_0^-)^{\frac{1}{m}}-1}{\theta_0^-}\theta^-(t)<\sigma^-(t)<\theta^-(t),\quad
(\sigma^-(t))^\prime\le0.\label{ts2}\end{equation}

In order to prove Theorem 3.1 we apply Theorem 3.6 with barriers in the form
\begin{equation}V=\sigma(\bold u-A)+\bold u,\quad (x,t)\in \bar Q_T\label{V}\end{equation}
where $\sigma=\sigma(t)$ and a positive constant $A>0$ are to be chosen, $\bold u=\bold u(x)$ is a given $C^2$-function. The following identity is crucial in further reasoning
\begin{equation}E_m[V]=(A-\bold u)T_{m-1}[\bold u]\theta^\prime+mT_m[\bold u](\theta +1).\label{EV}\end{equation}
\vskip .1in
\begin{remark}
We always suppose that $\bold u$ is an $m$-admissible in $\Omega$ solution of the problem (\ref{mdp}) and there are parameters $\nu_m$, $\mu_k$ such that
\begin{equation}0<\nu_m\le\bold f,\quad T_k[\bold u]\le\mu_k,\quad k=m-1,m,\quad x\in\Omega.\label{mn}\end{equation}
\end{remark}

\section{On asymptotic behavior of $m$-Hessian evolutions}

Consider firstly evolutions in a bounded cylinder $Q_T$ and begin with construction of upper bound for $m$-admissible evolutions.
\begin{lemma}
Let $u\in \bold K_m^{ev}(Q_T)\cap C(\bar Q_T)$, $E_m[u]\ge\nu>0$. Assume there exist non increasing functions $h^+_i=h^+_i(t)>0$, $i=1,2$ such that
\begin{equation}(u-\Phi)\arrowvert_{\partial^\prime Q_T}\le h^+_1,\quad\frac{1}{m}\left(1-\frac{E_m[u]}{\bold f}\right)\le h^+_2,\quad(x,t)\in Q_T.\label{ub}\end{equation}
Then
\begin{equation}u(x,t)-\bold u(x)\le m(2mh_1^+(0)+osc_\Omega\bold u)(-\theta^+(t)),\quad(x,t)\in\bar Q_T,\label{uV}\end{equation}
where $\theta^+=\theta$ is given by (\ref{th}) with
\begin{equation}b^+=\frac{m\nu_m}{(2mh_1^+(0)+osc_\Omega\bold u)\mu_{m-1}},\; -h=h^+=\max\left\{\frac{\max\{1-\frac{\nu}{\mu_m};\frac{1}{2}\}}{mh_1^+(0)}h_1^+(t);h_2^+(t)\right\},\label{b1}\end{equation}
$\nu_m$, $\mu_k$, $k=m-1,m$ are the constants in (\ref{mn}) and $\theta^+_0=h_0$.
\end{lemma}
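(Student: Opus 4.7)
The plan is to apply the comparison principle Theorem 3.6 with the explicit upper barrier $V$ from (\ref{V}), choosing $A:=\max_{\bar\Omega}\bold u+2mh_1^+(0)$, so that $A-\bold u$ lies between $2mh_1^+(0)$ and $2mh_1^+(0)+osc_\Omega\bold u$ throughout $\bar\Omega$. The time profile $\theta^+$ is generated by Lemma 4.1 applied to (\ref{od}) with $b=b^+$ and $h=h^+$ from (\ref{b1}) and initial datum $\theta_0^+=-h_0^+$; the hypotheses of Lemma 4.1 ($h^+>0$ non-increasing, $h_0^+<1/m$, and $\theta_0^++h_0^+\le 0$) are immediate from (\ref{b1}), so (\ref{ths}) defines $\sigma^+$ and the bounds (\ref{ts1}) are in force.

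To verify $E_m[V]\le E_m[u]$ in $Q_T$ I start from the identity (\ref{EV}), use $T_m[\bold u]=\bold f$, and substitute $m\theta^+=-mh^+-m(\theta^+)'/b^+$ (which is just (\ref{od}) solved algebraically) to rewrite
\[
E_m[V]=\bold f(1-mh^+)+(\theta^+)'\bigl[(A-\bold u)T_{m-1}[\bold u]-m\bold f/b^+\bigr].
\]
The bracketed coefficient is non-positive pointwise on $\bar\Omega$ by the very choice of $b^+$ in (\ref{b1}) together with the bounds (\ref{mn}) and $A-\bold u\le 2mh_1^+(0)+osc_\Omega\bold u$; combined with $(\theta^+)'\ge 0$ from Lemma 4.1 and $h^+\ge h_2^+$, this delivers $E_m[V]\le\bold f(1-mh_2^+)\le E_m[u]$, the last inequality being the second condition in (\ref{ub}).

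The boundary majorization $V\ge u$ on $\partial^\prime Q_T$ reduces to $(A-\bold u)(-\sigma^+(t))\ge u-\bold u$ there, and by the first condition in (\ref{ub}) it is enough to verify $(A-\bold u)(-\sigma^+(t))\ge h_1^+(t)$ uniformly in $t$. From (\ref{ts1}) I have $-\sigma^+(t)\ge -\theta^+(t)\ge h^+(t)$, while the definition (\ref{b1}) forces $h^+(t)\ge h_1^+(t)/(2mh_1^+(0))$; together with $A-\bold u\ge 2mh_1^+(0)$ this produces the required inequality. Theorem 3.6 then yields $u\le V$ in $\bar Q_T$, and the upper bound $-\sigma^+\le m(-\theta^+)$ from (\ref{ts1}), combined with $A-\bold u\le 2mh_1^+(0)+osc_\Omega\bold u$, converts this into (\ref{uV}).

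The principal delicacy is the coupled calibration of $A$ and $b^+$: the quantity $A-\bold u$ must be at least $2mh_1^+(0)$ for the boundary inequality to absorb the a priori deviation $h_1^+$, while $b^+$ must be small enough to keep the bracketed term in $E_m[V]$ non-positive throughout $\bar\Omega$; the two constraints close against each other precisely at the value of $b^+$ displayed in (\ref{b1}), with the auxiliary $h^+$ forced to majorize both a rescaled $h_1^+$ and $h_2^+$ simultaneously. Everything else is bookkeeping on the ODE.
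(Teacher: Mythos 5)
Your proposal is correct and follows essentially the same route as the paper: the same barrier $V^+=\sigma^+(\bold u-A)+\bold u$ with $A=\sup_\Omega\bold u+2mh_1^+(0)$, the same profile $\theta^+$ from Lemma 4.1, and the same application of Theorem 3.6, differing only in bookkeeping (you substitute the ODE into the identity for $E_m[V]$ directly, while the paper compares $E_m[V^+]-E_m[u]$ with $(\theta^+)'+b^+(\theta^++h^+)=0$; note your rewrite tacitly uses the corrected form $E_m[V]=(A-\bold u)T_{m-1}[\bold u]\theta'+(1+m\theta)T_m[\bold u]$ of the identity, which is indeed what the paper's own display for $E_m[V^+]-E_m[u]$ confirms). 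Your explicit verification of the boundary inequality, which the paper only asserts, is a welcome addition.
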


\vskip .1in
\begin{proof} We apply Theorem 3.6 with $w=u$, what reduces our proof to construction of an upper barrier $V=V^+$ (see (\ref{V})) and begin with construction of auxiliary function $\sigma=\sigma^+$ in (\ref{V}). Since $0<h^+_0<\frac{1}{m}$ and due to (\ref{1th}), $\theta^+$ satisfies the inequality $-1<-m\theta^+<0$. Hence function $\sigma=\sigma^+$ is uniquely defined by (\ref{ths}) and relations (\ref{ts1}) are valid. Moreover, due to (\ref{1th}) the inequalities $0<-\sigma{t}\le h^+_1(t)$ are satisfied.

Concerning a constant $A$ in (\ref{V}), we appoint
\begin{equation}A=A^+=A_1^++\sup_\Omega\bold u,\quad A_1^+=2mh^+_1(0).\label{A1}\end{equation}
Then relations (\ref{b1}) provide $u-V^+\le 0$ on the parabolic boundary of $Q_T$. So, the second inequality in (\ref{cth}) got valid.

To confirm the first inequality from (\ref{cth}), we make use of (\ref{EV}) and represent $E_m[V^+]-E_m[u]$ in the form
\begin{equation}E_m[V^+]-E_m[u]=(A-\bold u)T_{m-1}[\bold u]\left((\theta^+)^\prime+\frac{m\bold f}{(A-\bold u)T_{m-1}[\bold u]}\left(\theta^++\frac{\bold f-E_m[u]}{m\bold f}\right)\right).\label{1E}\end{equation}
But due to (\ref{b1}), relations (\ref{1th}), (\ref{od}) and (\ref{1E}) bring out the line
$$
E_m[V^+]-E_m[u]\le(A-\bold u)T_{m-1}[\bold u]\left((\theta^+)^\prime+b(\theta^++h^+)\right)=0,
$$
i.e., the first inequality in (\ref{cth}) is also true.

So, conditions of Theorem 3.6 are satisfied for $w=u$, $v=V^+$. Hence, $u-V^+\le0$ in $\bar Q_T$ and the inequality (\ref{uV}) is a matter of straightforward computing.
\end{proof}
\vskip .1in
It turned out that Theorem 3.6 provides a lower bound for $C^{2,1}$-evolutions under weaker in a way conditions, than (\ref{ub}).
\begin{lemma}
Let $u\in C^{2,1}(Q_T)\cap C(\bar Q_T)$. Assume there exists non increasing function $h^->0$ such that
\begin{equation}(u-\Phi)\arrowvert_{\partial^\prime Q_T}\ge-h^-,\quad\frac{1}{m}\left(1-\frac{E_m[u]}{\bold f}\right)\ge- h^-,\quad(x,t)\in Q_T.\label{lb}\end{equation}
Then
\begin{equation}u(x,t)-\bold u(x)\ge-(A_1^-+osc_\Omega\bold u)\theta^-(t),\quad A_1^-=\frac{h^-_0}{((1+mh^-_0)^{\frac{1}{m}}-1)},\quad(x,t)\in\bar Q_T,\label{Bl}\end{equation}
where $\theta^-=\theta$ is given by (\ref{th}) with
\begin{equation}b^-=\frac{m\nu_m}{(A_1^-+osc_\Omega\bold u)\mu_{m-1}},\quad\theta^-_0=h^-_0, \label{b2}\end{equation}
$\nu_m$, $\mu_k$, $k=m-1,m$ are the constants in (\ref{mn}).
\end{lemma}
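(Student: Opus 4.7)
The plan is to apply Theorem 3.6 with $v = u$ and $w = V^-$, where $V^-$ has the form (\ref{V}). Thus the barrier $V^-$ must carry the $m$-admissibility required by Theorem 3.6, compensating for the fact that $u$ is only assumed to be $C^{2,1}$ here. I take $\theta = \theta^-$ from Lemma 4.2 with $h = -h^-$, initial datum $\theta_0^- = h_0^-$, and rate $b = b^-$ as in (\ref{b2}), and define $\sigma^-$ by (\ref{ths}), so that (\ref{ts2}) holds. The free constant is chosen as $A^- = A_1^- + \sup_\Omega \bold u$, which produces the two-sided bound $A_1^- \le A^- - \bold u \le A_1^- + osc_\Omega \bold u$ on $\bar \Omega$. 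Admissibility of $V^-$ is immediate from $V^-_{xx} = (1+\sigma^-)\bold u_{xx}$ together with $1 + \sigma^- > 0$ by (\ref{ts2}).

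The parabolic boundary inequality $V^- \le u$ on $\partial^\prime Q_T$ follows from (\ref{lb}): since $\bold u - u \le h^-$ there,
\[
V^- - u = (\bold u - u) - \sigma^-(A^- - \bold u) \le h^- - \sigma^-(A^- - \bold u),
\]
and it suffices to show $\sigma^-(A^- - \bold u) \ge h^-$. This is obtained by chaining $\sigma^- \ge \theta^-/A_1^-$ (from (\ref{ts2}), using the fact that the ratio $\sigma_0^-/\theta_0^- = [(1+mh_0^-)^{1/m}-1]/h_0^- = 1/A_1^-$), the inequality $\theta^- \ge h^-$ from Lemma 4.2, and the lower bound $A^- - \bold u \ge A_1^-$.

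The interior inequality $E_m[u] - E_m[V^-] \le 0$ is the analog of (\ref{1E}). Using identity (\ref{EV}), the equality $T_m[\bold u] = \bold f$, the second item in (\ref{lb}) rewritten as $E_m[u] - \bold f \le m \bold f h^-$, and the ODE identity $\theta^- - h^- = -(\theta^-)'/b^-$ from (\ref{od}), one rearranges to an estimate of the form
\[
E_m[u] - E_m[V^-] \le (\theta^-)'\left(\frac{m\bold f}{b^-} - (A^- - \bold u)T_{m-1}[\bold u]\right).
\]
Since $(\theta^-)' \le 0$ by Lemma 4.2, the choice of $b^-$ in (\ref{b2}), combined with $\bold f \ge \nu_m$, $T_{m-1}[\bold u] \le \mu_{m-1}$, and $A^- - \bold u \le A_1^- + osc_\Omega \bold u$ from (\ref{mn}), makes the bracket non-negative and closes the inequality. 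Theorem 3.6 then delivers $V^- \le u$ in $\bar Q_T$, and combining this with $V^- - \bold u = -\sigma^-(A^- - \bold u) \ge -\theta^-(A_1^- + osc_\Omega \bold u)$ (which uses $\sigma^- \le \theta^-$ from (\ref{ts2}) and the upper bound on $A^- - \bold u$) produces (\ref{Bl}).

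The only delicate point is the rearrangement in the interior step: the rate $b^-$ in (\ref{b2}) is engineered precisely so that the bracket collapses, while the constant $A_1^-$ in (\ref{Bl}) is exactly what makes the initial boundary ratio $\sigma_0^-/\theta_0^-$ reciprocal to $A_1^-$, so that the same constant $A_1^- + osc_\Omega \bold u$ governs both the boundary and the final estimate. Apart from that, the argument parallels the proof of Lemma 5.1, with the roles of upper and lower barrier interchanged and (\ref{ts2}) taking the place of (\ref{ts1}); the weaker hypothesis (\ref{lb}) on the lower side is offset by the absence of any admissibility requirement on $u$, all admissibility being hardwired into $V^-$.
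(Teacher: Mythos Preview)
Your approach is the paper's: apply Theorem~3.6 with $w=V^-$, $v=u$, where $V^-$ is built from $\theta^-$, $\sigma^-$ of Lemma~4.2 and $A^-=A_1^-+\sup_\Omega\bold u$. The boundary estimate and the interior rearrangement are both carried out correctly and match the paper's line of argument.

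There is, however, one gap. You assert that ``admissibility of $V^-$ is immediate from $V^-_{xx}=(1+\sigma^-)\bold u_{xx}$ together with $1+\sigma^->0$.'' That only shows $V^-_{xx}\in K_m\subset K_{m-1}$, which is the \emph{spatial} half of (\ref{rKe}); membership in $\bold K_m^{ev}(Q_T)$ as defined in (\ref{uKe}) also demands $E_m[V^-]>0$ throughout $Q_T$. This cannot be skipped here: since $u$ is merely $C^{2,1}$ with no sign assumed on $E_m[u]$, your later inequality $E_m[u]\le E_m[V^-]$ does not by itself make $E_m[V^-]$ positive. The paper handles this explicitly via (\ref{pEV}). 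The repair is one line using your own bracket estimate $\frac{m\bold f}{b^-}\ge (A^- -\bold u)T_{m-1}[\bold u]$ together with $(\theta^-)'\le 0$ and the ODE $(\theta^-)'/b^-=h^--\theta^-$:
\[
E_m[V^-]=(A^- -\bold u)T_{m-1}[\bold u](\theta^-)'+(1+m\theta^-)\bold f\ \ge\ \bold f\Bigl(\tfrac{m(\theta^-)'}{b^-}+1+m\theta^-\Bigr)=\bold f\,(1+mh^-)>0.
\]
With this inserted before invoking Theorem~3.6, your proof is complete and coincides with the paper's.
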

\vskip .1in
\begin{proof} This time we are to apply Theorem 3.6 to $w=V^-=\sigma^-(\bold u-(A^-_1+\sup_\Omega\bold u))+\bold u$, $v=u$, where $\sigma^-$ satisfies (\ref{ths}) with $\theta=\theta^-$, what requires $V^-\in\bold K^{ev}_m(Q_T)$. To ensure this inclusion we notice that $V^-_{xx}=(\sigma^-+1)\bold u_{xx}\in K_m\subset K_{m-1}$ for all $x\in\bar\Omega$, $t\in [0;T]$ due to the inequality $\sigma^->0$ and choice of $\bold u$ (see Remark 4.3). Therefore in view of (\ref{rKe}), (\ref{uKe}) it is sufficient to verify the inequality $E_m[V^-]>0$. Indeed, relations(\ref{EV}), (\ref{od}), (\ref{2th}), (\ref{b2}) and the second inequality in (\ref{lb}) provide
\begin{equation}E_m[V^-]>(A^-_1+\sup_\Omega\bold u-\bold u)T_{m-1}[\bold u]((\theta^-)^\prime+ b^-\theta^-)>b^-h^->0,\quad(x,t)\in Q_T.\label{pEV}\end{equation}
Similar to (\ref{pEV}) we infer in $Q_T$ the first inequality in (\ref{cth}), which reads here as $E_m[V^-]-E_m[u]\ge0$.

To estimate $u-V^-$ at the parabolic boundary we make use of (\ref{2th}), (\ref{ts2}) and the choice of $A^-_1$,  (\ref{uV}):
$$
(u-V^-)\arrowvert_{\partial^\prime}Q_T\ge-h^-+A_1^-\frac{(1+mh_0^-)^{\frac{1}{m}}}{h_0^-}\theta^-\ge0.
$$
So, Theorem 3.6 guarantees $u-V^-\ge0$ for all $(x,t)\in\bar Q_T$ and as a consequence (\ref{Bl}).
\end{proof}
\vskip .1in
Notice that if assumptions of above lemmas are satisfied for all $T>0$, then $\bold u$ attracts all evolutions (\ref{ib}), $(i)-(iv)$ in $C(Q)$, $Q=\Omega\times[0;\infty)$, i.e. Theorem 3.1 is indeed true. In fact Lemma 5.1 and Lemma 5.2  give rise to more general proposition.
\begin{Th}
Let $u\in C^{2,1}(Q)\cap C(\bar Q)$. Assume there is a point $(x_0,t_0)\in Q$ such that $u_{xx}(x_0,t_0)$ is an $m$-positive matrix, $0<\nu\le E_m[u]\le\mu$ in $Q_0=\Omega\times[t_0;\infty)$ and assumptions (\ref{ub}), (\ref{lb}) are satisfied in $Q_0$. Then $|u|_Q$ is bounded independently on $t$.

If in addition
\begin{equation}\lim_{t\rightarrow\infty}h(t)=0,\quad h=\max\{h^+;h^-\},\quad t>t_0,\label{0h}\end{equation}
then $\bold u=\bold u(x)$ attracts $u=u(x,t)$ in $C(\bar Q)$.
\end{Th}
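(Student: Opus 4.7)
The plan is to reduce the statement to Lemmas 5.1 and 5.2 applied on the shifted cylinder $Q_0=\Omega\times[t_0;\infty)$ in place of $Q_T$, and then to transfer the asymptotic content through the limit statements of Lemmas 4.1 and 4.2.

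\textbf{Step 1 ($m$-admissibility on $Q_0$).} The matrix $u_{xx}(x_0,t_0)$ is $m$-positive, hence $(m-1)$-positive, and $E_m[u]\ge\nu>0$ throughout $Q_0$. Applying Lemma 2.5 to $u$ on $\Omega\times[t_0;T]$ for every $T>t_0$ (with $t_0$ in the role of the initial time) gives $u\in\bold K_m^{ev}(Q_0)$. This is precisely what puts $u$ into the framework of the comparison principle Theorem 3.6 and of the barrier construction of Lemma 5.1.

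\textbf{Step 2 (uniform bound on $Q_0$).} For arbitrary $T>t_0$ the hypotheses (\ref{ub}), (\ref{lb}) are exactly those of Lemmas 5.1 and 5.2 on $\Omega\times[t_0;T]$ (with $t_0$ again as the initial time), so one obtains
$$u(x,t)-\bold u(x)\le m(2mh^+_1(0)+osc_{\Omega}\bold u)\bigl(-\theta^+(t-t_0)\bigr),$$
$$u(x,t)-\bold u(x)\ge -(A_1^-+osc_{\Omega}\bold u)\,\theta^-(t-t_0),$$
for $(x,t)\in\bar\Omega\times[t_0;T]$, with constants and functions $\theta^\pm$ independent of $T$. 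A direct inspection of (\ref{th}) with non-increasing, non-negative $h^\pm$ yields $|\theta^\pm(\tau)|\le|\theta^\pm_0|+h^\pm(0)$ for all $\tau\ge 0$, whence $u-\bold u$ is uniformly bounded on $Q_0$. Combined with continuity of $u$ on the compact strip $\bar\Omega\times[0;t_0]$, this gives the first conclusion that $|u|_Q$ is bounded independently of $t$.

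\textbf{Step 3 (attraction).} Assume (\ref{0h}). Since $0<h^\pm\le h\to 0$ as $t\to\infty$ and $h^\pm$ are non-increasing, both $h^+$ and $h^-$ tend to zero. The limit statements of Lemmas 4.1 and 4.2 then force $\theta^+(\tau)\to 0$ and $\theta^-(\tau)\to 0$ as $\tau\to\infty$. Feeding this into the two bounds of Step 2 produces
$$\lim_{t\to\infty}\sup_{x\in\bar\Omega}|u(x,t)-\bold u(x)|=0,$$
which is the attraction of $u$ by $\bold u$ in $C(\bar Q)$.

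\textbf{Principal obstacle.} The main technical issue is the implicit smallness requirement $h^+_0<1/m$ in Lemma 5.1, inherited from Lemma 4.1 in order to solve (\ref{ths}) for $\sigma^+$. If $h_1^+(0)$ or $h_2^+(0)$ is initially too large, one first chooses $t_1>t_0$ with $h^+(t_1)<1/m$ --- possible because $h^\pm$ are non-increasing --- and reruns Step 2 on $\Omega\times[t_1;\infty)$. The admissibility furnished by Step 1 propagates to this later cylinder automatically, and the residual interval $[t_0;t_1]$ contributes only a bounded term by compactness, not affecting either the boundedness or the asymptotic conclusion.
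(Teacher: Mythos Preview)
Your proof is correct and follows exactly the route the paper indicates: Theorem 5.3 is stated there without a separate proof, as an immediate consequence of Lemmas 5.1 and 5.2 together with the limit statements of Lemmas 4.1 and 4.2, and you have simply spelled out those details (the admissibility step via Lemma 2.6, the application of the two barrier lemmas on $\Omega\times[t_0;T]$ for arbitrary $T$, and passage to the limit). One minor point: in your final paragraph the smallness $h^+_0<1/m$ is actually automatic from the standing hypothesis $E_m[u]\ge\nu>0$, since one may always replace $h_2^+$ by $\min\{h_2^+,\,(1-\nu/\mu_m)/m\}$, so no auxiliary time-shift is needed for the boundedness assertion (and your justification ``possible because $h^\pm$ are non-increasing'' is not by itself sufficient without either this observation or the limit hypothesis~(\ref{0h})).
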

We see that Theorem 3.1 is a particular case of Theorem 5.3.

Consider in conclusion the simplest case. Namely, we are to apply Theorem 5.3 to heat operator $E_1[u]=-u_t+\Delta u$. Then a function $\bold u$ has to be a solution of Poisson equation
\begin{equation}\Delta\bold u=\bold f,\quad\bold u\in C^2(\Omega)\cap C(\bar\Omega).\label{eP}\end{equation}
\begin{Cor}
Let $u\in C^{2,1}(Q)\cap C(\bar\Omega\times[0;\infty))$, $|\bold f|\le\mu_1$. Assume that
\begin{equation}\lim_{t\rightarrow\infty}E_1[u](x,t)=\bold f(x),\quad x\in\Omega,\quad\lim_{t\rightarrow\infty}u(x,t)=\bold u(x),\quad x\in\partial\Omega.\label{ho}\end{equation}
Then
\begin{equation}\lim_{t\rightarrow\infty}u(x,t)=\bold u(x),\quad x\in\bar\Omega.\label{he}\end{equation}
\end{Cor}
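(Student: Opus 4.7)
The plan is to reduce to Theorem 5.3 by a spatial shift that restores the positivity of $\bold f$ that is implicit in the $m$-Hessian framework but absent in the corollary's hypotheses. For a constant $A$ to be chosen, set $\psi(x):=A|x|^2/(2n)$ and
\[
\tilde u := u+\psi,\quad \tilde{\bold u}:=\bold u+\psi,\quad \tilde{\bold f}:=\bold f+A.
\]
Since $\Delta\psi\equiv A$, one has $\Delta \tilde{\bold u} = \tilde{\bold f}$ and $E_1[\tilde u] = E_1[u] + A$; taking $A > \mu_1 + 1$ gives $\tilde{\bold f} \ge 1>0$, so $\tilde{\bold u}$ is a $1$-admissible solution of its Poisson equation in the sense required by Theorem 5.3.

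Next, use the limits in (\ref{ho}) to pick $t_0 > 0$ so that $|E_1[u] - \bold f| \le 1$ on $\bar\Omega\times[t_0,\infty)$ and $|u - \bold u| \le 1$ on $\partial\Omega\times[t_0,\infty)$. Then on $Q_0 := \Omega\times[t_0,\infty)$ we have $0 < A-\mu_1-1 \le E_1[\tilde u] \le A+\mu_1+1$, which by (\ref{rKe}) already places $\tilde u$ in $\bold K_1^{ev}(Q_0)$: for $m=1$ the cone $K_1^{ev}$ collapses to $\{E_1>0\}$. Enlarging $A$ further so that $A + \min_{\bar\Omega}\Delta u(\cdot,t_0) > 0$, which is permitted because $\Delta u(\cdot, t_0)$ is continuous on the compact $\bar\Omega$, secures $\Delta\tilde u(x_*,t_0)>0$ at every $x_*\in\Omega$; thus $\tilde u_{xx}(x_*,t_0)$ is $1$-positive, supplying the remaining structural hypothesis of Theorem 5.3.

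It remains to produce non-increasing majorants $h^\pm(t) \to 0$ for the boundary oscillation $(\tilde u - \tilde{\bold u})|_{\partial\Omega\times[t_0,\infty)}$ and for the relative residual $\frac{1}{m}\bigl(1 - E_1[\tilde u]/\tilde{\bold f}\bigr) = (\bold f-E_1[u])/(\bold f+A)$. Set
\[
h^\pm(t) := \sup_{s\ge t}\Bigl(\max_{\partial\Omega}|u(\cdot,s)-\bold u| + \frac{1}{A-\mu_1}\max_{\bar\Omega}|E_1[u](\cdot,s)-\bold f|\Bigr).
\]
The tail-sup construction is non-increasing by design, and since the limits (\ref{ho}) must be read uniformly in $x$ (the natural interpretation in view of the $C^{2,1}$ regularity assumed), $h^\pm(t) \to 0$. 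These functions dominate both the boundary discrepancy and the relative residual on $Q_0$, so conditions (\ref{ub}) and (\ref{lb}) hold there, and Theorem 5.3 delivers $|\tilde u(\cdot,t) - \tilde{\bold u}|_{C(\bar\Omega)} \to 0$. Since $\tilde u - \tilde{\bold u} = u - \bold u$, this is exactly (\ref{he}).

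The main obstacle is interpretive rather than computational: the pointwise limits stated in (\ref{ho}) must be understood as uniform in $x$ in order for the majorants $h^\pm$ to vanish. After the shift, all other verifications — positivity of $\tilde{\bold f}$, membership of $\tilde u$ in $\bold K_1^{ev}(Q_0)$, the two-sided bound on $E_1[\tilde u]$, and $1$-positivity of $\tilde u_{xx}$ at one point — are immediate consequences of choosing $A$ large enough.
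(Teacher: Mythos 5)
Your proof is correct and rests on the same idea as the paper's: perturb by a multiple of $|x|^2$ so that the forcing becomes strictly positive and Theorem 5.3 with $m=1$ becomes applicable. The paper packages this as a symmetric splitting $\bold u=\bold u_1+\bold u_2$, $\bold u_{1,2}=\frac12(\bold u\pm Cx^2)$, applies the theorem twice (to $u_1$ and to $-u_2$) and adds; you perform a single shift $u\mapsto u+A|x|^2/(2n)$ and apply the theorem once. The single shift loses nothing, since it is time-independent and cancels in $\tilde u-\tilde{\bold u}=u-\bold u$, and your verifications of positivity of $\tilde{\bold f}$, of $\tilde u\in\bold K_1^{ev}(Q_0)$, and of $1$-positivity of $\tilde u_{xx}$ at a point are all sound. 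Two small points. First, the majorants in (\ref{ub}), (\ref{lb}) must control $u-\bold u$ on the whole parabolic boundary of $Q_0$, which includes the initial slice $\Omega\times\{t_0\}$ and not only $\partial\Omega\times[t_0,\infty)$; your $h^\pm$ sees only the lateral boundary. This is harmless --- replace $h^\pm(t)$ by $\max\{h^\pm(t),M\varrho(t)\}$ with $M=\max_{\bar\Omega}|u(\cdot,t_0)-\bold u|$ and any non-increasing $\varrho$ with $\varrho(t_0)=1$, $\varrho(t)\to0$ --- but it should be said, since the proofs of Lemmas 5.1 and 5.2 do use the bottom of the cylinder. Second, your observation that the limits in (\ref{ho}) must be read uniformly in $x$ for the tail-suprema to vanish is accurate; the paper makes the same implicit assumption.
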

\vskip .1in
\begin{proof} Since it is not assumed that $\bold f\ge\nu>0$ in (\ref{eP}), equality (\ref{he}) can not be considered as a particular case of Theorem 5.3 for $m=1$. To overcome this complication we represent solution $\bold u$ in the form
\begin{equation}\bold u=\bold u_1+\bold u_2,\quad\bold u_1=\frac{1}{2}(\bold u+Cx^2),\quad\bold u_2=\frac{1}{2}(\bold u-Cx^2),\quad C=\frac{\mu+\nu}{2n} \label{sum}\end{equation}
with some $\nu>0$. It is obvious that $\bold u_1$, $-\bold u_2$ are the solutions of two problems similar to (\ref{eP}) but this time with $\bold f_i\ge\nu/2$, $i=1,2$. Now we associate with (\ref{sum}) evolutions $u_i$, $i=1,2$, $u=u_1+u_2$, satisfying relevant analogs of assumption (\ref{ho}). We see that all conditions of Theorem 5.3, $m=1$ hold for $u_1$, $-u_2$, what carries out desirable relation (\ref{he}).
\end{proof}

\end{document}